\newcommand{\excise}[1]{}
\newtheorem{thm}{Theorem}[section]
\newtheorem{lemma}[thm]{Lemma}
\newtheorem{cor}[thm]{Corollary}
\newtheorem{prop}[thm]{Proposition}
\newtheorem{prob}[thm]{Problem}
\theoremstyle{definition}
\newtheorem{example}[thm]{Example}
\newtheorem{remark}[thm]{Remark}
\newtheorem{defn}[thm]{Definition}
\numberwithin{equation}{section}
\newcommand{\ring}[1]{\ensuremath{\mathbb{#1}}}
\renewcommand\>{\rangle}
\newcommand\<{\langle}
\newcommand\NN{\ring{N}}
\newcommand\ZZ{\ring{Z}}
\DeclareMathOperator\supp{supp} 
\DeclareMathOperator\Betti{Betti} 
\DeclareMathOperator\Cong{Cong} 
\begin{document}

\mbox{}
\title{Minimal presentations of shifted numerical monoids}

\author[Conway]{Rebecca Conaway}
\address{Monmouth University\\West Long Branch, NJ 07764}
\email{s0969183@monmouth.edu}

\author[Gotti]{Felix Gotti}
\address{Mathematics Department\\UC Berkeley\\Berkeley, CA 94720}
\email{felixgotti@berkeley.edu}

\author[Horton]{Jesse Horton}
\address{University of Arkansas\\Fayetteville, AR 72701}
\email{jphorton@uark.edu}

\author[O'Neill]{Christopher O'Neill}
\address{Mathematics Department\\Texas A\&M University\\College Station, TX 77843}
\email{coneill@math.tamu.edu}

\author[Pelayo]{Roberto Pelayo}
\address{Mathematics Department\\University of Hawai`i at Hilo\\Hilo, HI 96720}
\email{robertop@hawaii.edu}

\author[Williams]{Mesa Williams}
\address{Lee University\\Cleveland, TN 37311}
\email{mprach00@leeu.edu}

\author[Wissman]{Brian Wissman}
\address{Mathematics Department\\University of Hawai`i at Hilo\\Hilo, HI 96720}
\email{wissman@hawaii.edu}

\date{\today}

\begin{abstract}
A numerical monoid is an additive submonoid of the non-negative integers.  Given a numerical monoid $S$, consider the family of ``shifted'' monoids $M_n$ obtained by adding $n$ to each generator of $S$.  In this paper, we examine minimal relations among the generators of $M_n$ when $n$ is sufficiently large, culminating in a description that is periodic in the shift parameter $n$.  We explore several applications to computation, combinatorial commutative algebra, and factorization theory.  
\end{abstract}

\maketitle


\section{Introduction} \label{sec:intro}


A minimal presentation of a numerical monoid $M$ (that is, an additive submonoid of the natural numbers) encapsulates the minimal relations among generators of $M$.  Such minimal presentations arise in the study of toric ideals, where they correspond to minimal generating sets for kernels of monomial maps \cite{clo}, and algebraic statistics, where they correspond to Markov bases \cite{algmarkov}.  Additionally, many arithmetic invariants of interest in combinatorial commutative algebra and factorization theory can be easily recovered (both theoretically and computationally) from a minimal presentation, making them a particularly useful tool in computational algebra~\cite{compoverview,numerical}.  

In this paper, we examine families of numerical monoids obtained by ``shifting'' a chosen generating set.  In particular, given positive integers $r_1 < \cdots < r_k$, consider numerical monoids of the form 
$$M_n = \<n, n+r_1, \ldots, n + r_k\>,$$
indexed by a shift parameter $n$.  Our main result is Theorem~\ref{t:minpresbij}, which describes how minimal presentations of $M_n$ vary with large $n$.  More specifically, we give an explicit bijection between the minimal presentations of $M_n$ and those of $M_{n + r_k}$ when $n > r_k^2$.  

Following Theorem~\ref{t:minpresbij}, we characterize the behavior of several arithmetic invariants determined by minimal presentations (e.g.\ Betti numbers and catenary degree), resulting in periodic or periodic-linear descriptions in each case.  
Some of these characterizations are new, while others strengthen existing results in the literature \cite{deltashiftgen,vu14}.  Our approach unifies these results (old and new) as consequences of a deeper structural phenomenon that occurs among the minimal relations of $M_n$ for large $n$, and improves each lower bound on $n$ that was previously given; see Remark~\ref{r:vucompare} for a thorough discussion of the benefits of our approach and resulting improvements.  

One of the primary consequences of Theorem~\ref{t:minpresbij} lies in the realm of computation.  While minimal presentations (and many of the arithmetic invariants they determine) are generally more difficult to compute for monoids with large generators, our results give a way to more efficiently perform these computations in some cases by instead computing a minimal presentation for a numerical monoid with smaller generators in the same shifted family.  We discuss the specifics in Remark~\ref{r:minprescomputation}, including a forthcoming implementation in the popular \texttt{GAP} package \texttt{numericalsgps} \cite{numericalsgpsgap}.  

\section{Background}
\label{sec:background}

In this section, we provide the necessary definitions related to the factorization theory of numerical monoids.  In what follows, let $\NN$ denote the set of non-negative integers.  

\begin{defn}\label{d:numericalmonoid}
A \emph{numerical monoid} $M$ is an additive submonoid of $\NN$.  When we write $M = \<m_1, \ldots, m_t\>$, we assume $m_1 < \cdots < m_t$, and the chosen generators $m_1, \ldots, m_t$ are called \emph{irreducible} elements or \emph{atoms}.  We say $M$ is \emph{primitive} if $\gcd(m_1, \ldots, m_t) = 1$.  
\end{defn}

\begin{defn}\label{d:factorization}
Fix a numerical monoid $M = \<m_1, \ldots, m_t\>$ and $a \in M$.  A \emph{factorization} of $a$ is an expression 
$$a = z_1m_1 + \cdots + z_tm_t$$
of $a$ as a sum of irreducible elements of $M$, which we often represent with the tuple $z = (z_1, \ldots, z_t) \in \NN^t$.  The \emph{length} of a factorization $z$ of $a$ is the total number
$$|z| = z_1 + \cdots + z_t$$
of irreducible elements appearing in $z$, and the \emph{support} of $z$ is the set
$$\supp(z) = \{m_i : z_i > 0\}$$
of distinct irreducible elements appearing in $z$.  
\end{defn}


\begin{defn}\label{d:facthomo}
Fix a numerical monoid $M = \<m_1, \ldots, m_t\>$ and $a \in M$.  The \emph{factorization homomorphism} of $M$ is the map $\pi:\NN^t \to M$ given by
$$\pi(z_1, \ldots, z_t) = z_1 m_1 + \cdots + z_t m_t.$$
The \emph{set of factorizations} of $a$ is the set
$$\mathsf Z_M(a) = \pi^{-1}(a) = \{z \in \NN^t : \pi(z) = a\} \subset \NN^t.$$
When there can be no confusion, we often omit the subscript and simply write $\mathsf Z(a)$.  
\end{defn}

We conclude this section with Theorem~\ref{t:minfactlen}, which appeared as \cite[Theorem~4.3]{elastsets} for minimally generated, primitive numerical monoids.  The statement below follows immediately from the proof of the original statement given in \cite{elastsets}.  

\begin{thm}\label{t:minfactlen}
Fix $m_1 < \cdots < m_t$, and suppose $M = \<m_1, \ldots, m_t\>$ is not necessarily primitive or minimally generated by $m_1, \ldots, m_t$.  The function $\mathsf m:M \to \NN$ sending each $a \in M$ to its smallest factorization length satisfies
$$\mathsf m(a + m_t) = \mathsf m(a) + 1$$
for all $a > m_{t-1}m_t$.  
\end{thm}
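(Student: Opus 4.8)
The plan is to establish the two inequalities $\mathsf m(a+m_t) \le \mathsf m(a)+1$ and $\mathsf m(a+m_t) \ge \mathsf m(a)+1$ separately. The first is immediate: appending a single copy of the atom $m_t$ to any factorization of $a$ of length $\mathsf m(a)$ produces a factorization of $a+m_t$ of length $\mathsf m(a)+1$, so $\mathsf m(a+m_t) \le \mathsf m(a)+1$. This direction uses nothing about the size of $a$.

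For the reverse inequality, set $b = a+m_t$ and fix a factorization $w = (w_1,\dots,w_t)$ of $b$ with $|w| = \mathsf m(b)$. If $w_t \ge 1$, then decreasing the last coordinate of $w$ by one gives a factorization of $a$ of length $\mathsf m(b)-1$, so $\mathsf m(a) \le \mathsf m(b)-1$ and we are done. Thus the heart of the argument is to show that the case $w_t = 0$ cannot occur once $a > m_{t-1}m_t$; I will do this by producing a strictly shorter factorization of $b$, contradicting minimality of $|w|$.

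So suppose $w_t = 0$. Then every atom occurring in $w$ is at most $m_{t-1}$, hence $b = \sum_{i=1}^{t-1} w_i m_i \le m_{t-1}|w|$; combined with $b = a+m_t > m_{t-1}m_t$ this forces $|w| > m_t$. List the $|w|$ atoms of $w$ (with multiplicity) in some order as $a_1,\dots,a_{|w|}$ and form the partial sums $s_0 = 0$ and $s_j = a_1 + \cdots + a_j$. Since there are $|w|+1 > m_t$ of these sums, two of them agree modulo $m_t$, say $s_i \equiv s_j \pmod{m_t}$ with $i < j$; then $a_{i+1} + \cdots + a_j = c\,m_t$ for some integer $c \ge 1$. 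Because each $a_\ell \le m_{t-1} < m_t$, we get $c\,m_t \le (j-i)m_{t-1} < (j-i)m_t$, so $c \le (j-i) - 1$. Replacing those $j-i$ atoms in $w$ by $c$ copies of the atom $m_t$ therefore yields a factorization of $b$ of length $|w| - (j-i) + c \le |w| - 1$, contradicting the minimality of $|w|$. Hence $w_t \ge 1$, which gives $\mathsf m(a) \le \mathsf m(b)-1$, and together with the first inequality we conclude $\mathsf m(a+m_t) = \mathsf m(a)+1$.

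The only real obstacle is the case $w_t = 0$, and the pigeonhole-on-partial-sums step is what resolves it; the crucial numerical input is the strict inequality $m_{t-1} < m_t$, which makes $c$ strictly smaller than $j-i$ and hence guarantees an actual drop in length after the replacement. No use is made of primitivity or of $m_1,\dots,m_t$ being a minimal generating set, in keeping with the stated level of generality; the only implicit assumption is $t \ge 2$, without which $m_{t-1}$ is undefined.
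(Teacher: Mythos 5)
Your proof is correct, and it is worth noting that the paper itself does not prove this statement at all: it simply cites \cite{elastsets}, remarking that the version stated here (without primitivity or minimal generation) follows from the proof of Theorem~4.3 there. Your argument is a correct self-contained replacement, and its crucial step is exactly the content of the cited Lemma~4.1 of \cite{elastsets}, which this paper restates as Lemma~\ref{l:boundsrc}\eqref{l:boundsrc_rk}: a factorization of length at least $m_t$ that avoids $m_t$ can be replaced by a strictly shorter one using $m_t$. Your pigeonhole on the partial sums modulo $m_t$ (each atom being strictly less than $m_t$, so $c$ copies of $m_t$ replace $j-i > c$ smaller atoms) proves precisely that lemma, and the surrounding bookkeeping ($\mathsf m(a+m_t) \le \mathsf m(a)+1$ trivially; $w_t = 0$ impossible for a minimum-length factorization of $a + m_t$ once $a > m_{t-1}m_t$, since then $|w| > m_t$) correctly yields the equality. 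You are also right that nothing in the argument uses primitivity or minimal generation, which is exactly the extra generality the paper needs when applying the theorem to $S = \langle r_1, \ldots, r_k \rangle$; so your write-up in effect supplies the verification the paper delegates to the reference.
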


\subsection*{Notation}
Through the remainder of this paper, fix $r_1 < \cdots < r_k$ and $n \in \NN$, and let 
$$S = \<r_1, \ldots, r_k\> \qquad \text{ and } \qquad M_n = \<n, n + r_1, \ldots, n + r_k\>$$
denote additive submonoids of $\NN$.  Unless otherwise stated, we assume $M_n$ is primitive and minimally generated as written, but we do \textbf{not} make either assumption for $S$.  Note that choosing $n$ as the first generator of $M_n$ ensures that every numerical monoid falls into exactly one shifted family.

\section{Sufficiently shifted numerical monoids}
\label{sec:sufficientshift}

In this section, we give the Theorem~\ref{t:mesalemma} and Corollary~\ref{c:bettilengthset}, which identify the core obstruction to Theorem~\ref{t:minpresbij} for small $n$ (in the sense of Remark~\ref{r:boundorigin}).  This result comes in the form of a description of the factorizations of Betti elements (Definition~\ref{d:betti}), whose factorizations encapsulate the minimal relations among atoms.  

\begin{defn}\label{d:betti}
Fix a numerical monoid $M$ and $a \in M$.  The \emph{factorization graph} of $a$, denoted $\nabla\!_a$, has vertex set $\mathsf Z(a)$, and two vertices $z, z' \in \mathsf Z(a)$ are connected by an edge whenever they have at least one irreducible in common.  We say $a$ is a \emph{Betti element} of $M$ if its factorization graph $\nabla\!_a$ is disconnected, and write $\Betti(M)$ for the set of Betti elements of $M$.  
\end{defn}

\begin{example}\label{e:betti}
The Betti elements of $M = \<6,9,20\>$ are 18 and 60, since
$$\begin{array}{rcl}
\mathsf Z(18) &=& \{(3,0,0), (0,2,0)\} \quad \text{ and } \\
\mathsf Z(60) &=& \{(10,0,0), (7,2,0), (4,4,0), (1,6,0), (0,0,3)\}
\end{array}$$
both yield disconnected factorization graphs.  Here, the factorizations of 18 represent the minimal relation between 6 and 9, namely that in any factorization of an element $a \in M$, one can replace three copies of 6 with two copies of 9 to yield a new factorization of $a$.  Similarly, 60 is the first element that can be factored using all three irreducibles, and thus gives the minimal ways to exchange copies of 20 for copies of 6 and 9.

In contrast, the element $126 \in M$ is not a Betti element of $M$, even though $(1,0,6), (0,14,0) \in \mathsf Z(126)$ have no irreducibles in common.  Indeed, this relation can be obtained by twice exchanging three 20's for ten 6's, yielding $(21,0,0)$, and then repeatedly exchanging three 6's for two 9's until $(0,14,0)$ is obtained.  This is represented by a path through the factorization graph $\nabla\!_{126}$ connecting $(1,0,6)$ to $(0,14,0)$ that passes through 9 vertices, including $(21,0,0)$.  
\end{example}


Before stating and proving Theorem~\ref{t:mesalemma} and Corollary~\ref{c:bettilengthset}, we prove Lemma~\ref{l:boundsrc}, which identifies the locations in the proof of Theorem~\ref{t:mesalemma} that require $n$ to be sufficiently large.   Note that Theorem~\ref{t:mesalemma} is the source of the bound given in nearly every ``eventual behavior'' result in this paper; see Remark~\ref{r:boundorigin} for more detail.  

\begin{lemma}\label{l:boundsrc}
Fix $a \in S$ and $s = (s_1, \ldots, s_k) \in \mathsf Z(a)$.  
\begin{enumerate}[(a)]
\item 
\label{l:boundsrc_rk}
If $|s| \ge r_k$ and $s_k = 0$, then there is a shorter factorization $s' \in \mathsf Z(a)$ with $s_k' > 0$.  

\item 
\label{l:boundsrc_bound}
If $a > r_{k-1}r_k$ and $s$ has minimum factorization length, then $s_k > 0$.   

\item 
\label{l:boundsrc_lenbound}
If $a > r_k^2$, then $|s| \ge r_k$.  

\end{enumerate}
\end{lemma}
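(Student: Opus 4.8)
The plan is to prove all three parts by elementary counting, viewing a factorization $s \in \mathsf Z(a)$ as a finite sequence (equivalently multiset) of generators drawn from $\{r_1, \ldots, r_{k-1}, r_k\}$ whose sum is $a$. Parts (b) and (c) will fall out of crude length estimates, while part (a) requires a pigeonhole argument on partial sums. Since the proof of part (b) invokes part (a), I would establish (a) first, then deduce (b), and finally dispatch (c) independently.

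For part (a): write the generators appearing in $s$ as a sequence $g_1, \ldots, g_N$, where $N = |s| \ge r_k$ and, because $s_k = 0$, each $g_\ell \in \{r_1, \ldots, r_{k-1}\}$. Form the partial sums $p_0 = 0$ and $p_\ell = g_1 + \cdots + g_\ell$, giving $N + 1 \ge r_k + 1$ integers. There being only $r_k$ residue classes modulo $r_k$, two of these, say $p_i$ and $p_j$ with $i < j$, agree modulo $r_k$, so $g_{i+1} + \cdots + g_j = c\,r_k$ for some integer $c \ge 1$ (the sum is strictly positive and divisible by $r_k$). Replacing the block $g_{i+1}, \ldots, g_j$ by $c$ copies of $r_k$ produces a factorization $s' \in \mathsf Z(a)$ with $s_k' = c > 0$. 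Since each removed generator is at most $r_{k-1} < r_k$, we get $c\,r_k = g_{i+1} + \cdots + g_j \le (j-i)r_{k-1} < (j-i)r_k$, hence $c < j - i$, so $|s'| = N - (j-i) + c < N = |s|$, as required.

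For part (b), suppose $s$ has minimum factorization length but $s_k = 0$. If $|s| \ge r_k$, part (a) yields a strictly shorter factorization, contradicting minimality; hence $|s| \le r_k - 1$. But then $a = s_1 r_1 + \cdots + s_{k-1}r_{k-1} \le r_{k-1}|s| \le r_{k-1}(r_k - 1) < r_{k-1}r_k$, contradicting $a > r_{k-1}r_k$; so $s_k > 0$. For part (c), the bound $a = s_1 r_1 + \cdots + s_k r_k \le r_k(s_1 + \cdots + s_k) = r_k|s|$ together with $a > r_k^2$ forces $|s| > r_k$, hence $|s| \ge r_k$.

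The only genuinely delicate point is the bookkeeping in part (a): one must check that the pigeonhole produces a block whose sum is a \emph{positive} multiple of $r_k$, and the crucial inequality $c < j - i$ — which is precisely what makes $s'$ strictly shorter than $s$ — uses $r_{k-1} < r_k$ in an essential way (and thereby uses the hypothesis $s_k = 0$). Everything else is a one-line estimate.
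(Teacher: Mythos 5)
Your proof is correct, and it is genuinely more self-contained than what the paper does: the paper disposes of all three parts in one line by citing \cite[Lemma~4.1]{elastsets} together with Theorem~\ref{t:minfactlen} (itself imported from \cite[Theorem~4.3]{elastsets}), whereas you reprove everything from scratch. Your pigeonhole argument for part (\ref{l:boundsrc_rk}) — taking partial sums of the $N+1 \ge r_k+1$ prefixes of the factorization, finding two congruent modulo $r_k$, and swapping the intervening block (of sum $cr_k$, with $c < j-i$ because every swapped generator is strictly less than $r_k$) for $c$ copies of $r_k$ — is exactly the kind of exchange argument underlying the cited lemma, and your verification that the replacement is strictly shorter is the one place where care is needed; you handle it correctly. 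Parts (\ref{l:boundsrc_bound}) and (\ref{l:boundsrc_lenbound}) are, as you say, one-line estimates: the paper instead routes (\ref{l:boundsrc_bound}) through the eventual behavior of the minimum-length function $\mathsf m$, while your deduction from (\ref{l:boundsrc_rk}) plus the bound $a \le r_{k-1}|s|$ is more direct and arguably more transparent. What the paper's approach buys is brevity and consistency with its references; what yours buys is a reader-checkable argument that does not require chasing down \cite{elastsets}, and it makes visible exactly where the hypotheses $s_k = 0$, $a > r_{k-1}r_k$, and $a > r_k^2$ enter. No gaps.
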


\begin{proof}
\cite[Lemma~4.1]{elastsets} and Theorem~\ref{t:minfactlen}.  
\end{proof}

\begin{thm}\label{t:mesalemma}
Suppose $n > r_k^2$, and let $z$ and $z'$ be factorizations of a Betti element $\beta \in M_n$ in different connected components of $\nabla\!_\beta$.  If $|z| > |z'|$, then $z_0 > 0$ and $z'_k > 0$.
\end{thm}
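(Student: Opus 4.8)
The plan is to convert everything into a bookkeeping problem over $S$, use the hypothesis $n>r_k^2$ exactly once (to push the relevant ``tail'' element of $S$ past every threshold appearing in Lemma~\ref{l:boundsrc}), and then run two contradiction arguments, the second of which reduces to the first. Throughout, write $m_0=n$ and $m_i=n+r_i$ for the atoms of $M_n$. For any factorization $w=(w_0,\dots,w_k)$ of $\beta$ we have $\beta=|w|\,n+\sum_{i\ge1}w_ir_i$, so the \emph{tail} $\tau(w):=(w_1,\dots,w_k)$ is a factorization of $\beta-|w|\,n$ in $S$, with $w_0=|w|-|\tau(w)|$; conversely, each $s\in\mathsf Z_S(\beta-jn)$ with $|s|\le j$ lifts to the length-$j$ factorization $(j-|s|,s_1,\dots,s_k)$ of $\beta$. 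Crucially, if two tails of the same length class share the atom $r_i$ of $S$, their lifts share $m_i$, so paths in the $S$-factorization graph of $\beta-jn$ lift to paths in $\nabla\!_\beta$ through length-$j$ factorizations. I will also use the elementary observation that, since $\nabla\!_\beta$ is disconnected, the atoms used by factorizations of $\beta$ are partitioned among the components: an atom used in some factorization in a component $C$ cannot appear in a factorization in a different component.

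Now set $\ell=|z|>|z'|=\ell'$ and $\alpha=\beta-\ell n\ge0$, $\alpha'=\beta-\ell'n=\alpha+(\ell-\ell')n$. The single point where $n>r_k^2$ enters is that $\alpha'\ge\alpha+n\ge n>r_k^2\ge r_{k-1}r_k$. Hence, by Lemma~\ref{l:boundsrc}: every factorization of $\alpha'$ in $S$ has length $\ge r_k$ (part~(\ref{l:boundsrc_lenbound})); every minimum-length one uses $r_k$ (part~(\ref{l:boundsrc_bound})); and the reduction of part~(\ref{l:boundsrc_rk}), applied to $\alpha'$, keeps a common atom (the only obstruction, a ``pure power'' $s$ with $\alpha'=r_ir_k$, is excluded since $\alpha'>r_k^2$). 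Consequently the $S$-factorization graph of $\alpha'$ is connected, and any of its factorizations not using $r_k$ fails to be of minimum length.

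Next I reduce the claim $z'_k>0$ to the claim $z_0>0$. Suppose $z'_k=0$. Then $\tau(z')\in\mathsf Z_S(\alpha')$ is not of minimum length, so picking a minimum-length $\bar s\in\mathsf Z_S(\alpha')$ (thus $\bar s_k>0$ and $|\bar s|<|\tau(z')|\le\ell'$) and using connectivity of the $S$-graph of $\alpha'$, there is a path from $\tau(z')$ to $\bar s$; lifting it gives a path in $\nabla\!_\beta$ from $z'$ to $\widehat z:=(\ell'-|\bar s|,\bar s_1,\dots,\bar s_k)$. Since $\widehat z_0=\ell'-|\bar s|>\ell'-|\tau(z')|=z'_0\ge0$, the factorization $\widehat z$ uses $m_0=n$ and lies in the component of $z'$; by the partition of atoms, no factorization in the component of $z$ uses $n$, so $z_0=0$. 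Thus it remains to show $z_0>0$, and I would do this by induction on $k$. The base case $k=1$ is immediate ($M_n$ is two-generated and $\beta=\lcm(n,n+r_1)$). For $k\ge2$, assume $z_0=0$; then $\alpha=\sum_{i\ge1}z_ir_i\ge\ell\,r_1$ while $\alpha'=\alpha+(\ell-\ell')n\le\ell'r_k$, which forces $(\ell-\ell')n<\ell(r_k-r_1)$, hence $|z|=\ell>r_k$. If moreover $z_k=0$, then (producing, exactly as above, a factorization in $z$'s component that uses $n+r_k$ via Lemma~\ref{l:boundsrc}(\ref{l:boundsrc_rk}) applied to $\alpha$, legitimate since $|z|>r_k$) we get $z'_k=0$ too, so $z$ and $z'$ are both factorizations of $\beta$ in the smaller shifted monoid $\langle n,n+r_1,\dots,n+r_{k-1}\rangle$: if they are disconnected there, the inductive hypothesis is contradicted, and if they are connected there they are connected in $\nabla\!_\beta$, also a contradiction. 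The one remaining configuration is $z_0=0$ with $z_k>0$, which I would rule out by exploiting that $z$ has no copies of $n$, so the exchange relation $r_k(n+r_i)=r_i(n+r_k)+(r_k-r_i)n$ cannot be run ``forward'' to eliminate copies of $n+r_k$ from $z$; playing this rigidity against $|z|>|z'|$ and $|z|>r_k$ should yield the contradiction.

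The routine ingredients are the tail dictionary and the lone use of $n>r_k^2$. \textbf{The main obstacle} is the combinatorial/connectivity bookkeeping concentrated in the induction step: first, verifying that the length-reducing step of Lemma~\ref{l:boundsrc}(\ref{l:boundsrc_rk}) can always be realized along edges of the relevant factorization graph (so that the ``improved'' factorizations genuinely land in the intended component, not merely somewhere in its factorization set), which is clean when a single exchange $r_k(r_i)\leftrightarrow r_i(r_k)$ suffices but needs care when no coordinate is already $\ge r_k$; and second, closing the case $z_0=0$, $z_k>0$, where the reduction to the smaller shifted monoid is unavailable and one must instead squeeze a contradiction out of the exchange relation together with the quantitative bound $|z|>r_k$. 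I expect essentially all the difficulty of the theorem to live in that last case.
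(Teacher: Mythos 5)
Your bookkeeping setup (the ``tail'' dictionary between factorizations of $\beta$ in $M_n$ and factorizations of $\beta-\ell n$ in $S$, and the observation that atoms are partitioned among components of $\nabla\!_\beta$) matches the paper's. But the argument built on top of it has three real gaps, and the most serious one is exactly where you place it.

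First, you assert that the $S$-factorization graph of $\alpha'=\beta-|z'|n$ is connected, but Lemma~\ref{l:boundsrc} says nothing about connectivity of factorization graphs: parts (\ref{l:boundsrc_rk})--(\ref{l:boundsrc_lenbound}) guarantee existence of certain factorizations (shorter, or using $r_k$), not that they are reachable by a path of shared atoms from a given one. The paper never uses or needs global connectivity of this graph; it only needs a single edge, and produces it by invoking Lemma~\ref{l:boundsrc}(\ref{l:boundsrc_bound}) after holding one atom of $s'$ fixed, so that the resulting $s''$ shares that preserved atom with $s'$. Second, even granting connectivity of the $S$-graph, lifting a path in that graph to a path in $\nabla\!_\beta$ requires every intermediate $S$-factorization to have length at most $\ell'$ (otherwise the lifted first coordinate is negative); when $z'_0=0$ that bound is exactly $|\tau(z')|$, and nothing you cite forces a path to stay below it. Third, and most importantly, your proof of $z_0>0$ is an induction on $k$ whose final case ($z_0=0$, $z_k>0$) is left as a plan rather than an argument; you correctly flag that all the difficulty is concentrated there, which means the proof is not actually closed. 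The paper sidesteps all of this: once it has established (via the same atom-partition observation you use) that $z_k=0$, it applies Lemma~\ref{l:boundsrc}(\ref{l:boundsrc_rk}) to conclude $|s|\le r_k$, and then the one-line chain $|z|>|z'|\ge |s'|\ge r_k\ge |s|$ already gives $z_0=|z|-|s|>0$, with no induction and no case analysis on $z_k$. The subsequent $z'_k>0$ then falls out: if $z'_k=0$, the $z''$ constructed from $s''$ has $z''_0>0$ and $z''_k>0$, so it is connected to both $z$ (via the atom $n$, since $z_0>0$) and $z'$, contradicting that they lie in different components. You should aim for this direct inequality argument rather than an induction on $k$, and you should replace the global-connectivity claim by the local one-edge construction the paper uses.
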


\begin{proof}
We begin by observing that
$$\beta - |z|n = z_0n + \sum_{i = 1}^k z_i(n + r_i) - |z|n = \sum_{i = 1}^k z_ir_i,$$
which yields an explicit bijection between the factorizations of $\beta \in M_n$ of length $\ell$ and the factorizations of $\beta - \ell n \in S$ of length at most $\ell$.  Let $s = (z_1, \ldots, z_k) \in \mathsf Z_S(\beta - |z|n)$ and $s' = (z_1', \ldots, z_k') \in \mathsf Z_S(\beta - |z'|n)$ denote the factorizations in $S$ corresponding to $z$ and $z'$, respectively.  
Notice that since $|z| > |z'|$, we have 
$$\beta - |z'|n \ge n + \beta - |z|n \ge n > r_k^2,$$
so $|s'| \ge r_k$ by Lemma~\ref{l:boundsrc}\eqref{l:boundsrc_lenbound}.  

Next, we claim some factorization in the same connected component of $\nabla\!_\beta$ as $z'$ has positive last component.  Certainly if $z_k' > 0$ the claim is proved.  Otherwise, since $\beta - |z'|n \ge r_{k-1}r_k$, applying Lemma~\ref{l:boundsrc}\eqref{l:boundsrc_bound} produces a factorization $s'' \in \mathsf Z_S(\beta - |z'|n)$ with $s_k'' > 0$ obtained from $s'$ by replacing all but one atom with a minimum length factorization.  The corresponding factorization $z'' = (|z'| - |s''|,s_1'', \ldots, s_k'') \in \mathsf Z(\beta)$ of $\beta$ under the above bijection is connected to $z'$ in $\nabla\!_\beta$ and has $z_0'' > 0$ and $z_k'' > 0$.  

Now, since $z$ and $z'$ lie in different connected components of $\nabla\!_\beta$, the above claim implies $z_k = 0$.  This means $|s| \le r_k$, since otherwise Lemma~\ref{l:boundsrc}\eqref{l:boundsrc_rk} would produce a factorization connected to $z$ in $\nabla\!_\beta$ with positive last coordinate.  As such,  
$$|z| > |z'| \ge |s'| \ge r_k \ge |s|,$$
which yields $z_0 = |z| - |s| > 0$.  Lastly, we conclude $z_k' > 0$, as otherwise the factorization $z''$ constructed above would be connected to both $z$ and $z'$ in $\nabla\!_\beta$ since it has positive first coordinate.  
\end{proof}

\begin{cor}\label{c:bettilengthset}
Suppose $n > r_k^2$ and that $M_n$ is primitive, and let $d = \gcd(r_1, \ldots, r_k)$.  Any two factorizations $z, z' \in \mathsf Z(\beta)$ of a Betti element $\beta \in \Betti(M_n)$ lying in different connected components of $\nabla\!_\beta$ satisfy $\big| |z| - |z'| \big| \in \{0,d\}$.
\end{cor}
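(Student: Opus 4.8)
The plan is to reduce to the case $|z|>|z'|$, extract structural information from Theorem~\ref{t:mesalemma} and its proof, establish the divisibility $d\mid|z|-|z'|$ for free, and then rule out $|z|-|z'|\ge 2d$ by producing a factorization of $\beta$ adjacent in $\nabla\!_\beta$ to both $z$ and $z'$. We may assume $|z|>|z'|$, since otherwise there is nothing to prove. By Theorem~\ref{t:mesalemma}, $z_0>0$ and $z'_k>0$, and the key consequence I would isolate is that \emph{no} factorization $w$ of $\beta$ can satisfy $w_0>0$ and $w_k>0$ simultaneously: such a $w$ would be joined by an edge to $z$ (sharing the atom $n$) and by an edge to $z'$ (sharing the atom $n+r_k$), contradicting that $z,z'$ lie in different components of $\nabla\!_\beta$. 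For the divisibility, note that any factorization of $\beta$ of length $\ell$ has $\beta-\ell n=\sum_i(\cdot)r_i\in S\subseteq d\ZZ$, so $d\mid\beta-\ell n$; since $M_n$ is primitive, $\gcd(n,d)=1$, and hence $\ell$ is determined modulo $d$. Thus all factorization lengths of $\beta$ are congruent mod $d$, so $d\mid|z|-|z'|$, and it remains to derive a contradiction from $|z|-|z'|\ge 2d$.

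Assuming $|z|-|z'|\ge 2d$, I would first pin down two minimal-length facts. Set $a':=\beta-|z'|n=\sum_i z'_ir_i\in S$; since $\beta\ge|z|n$ we get $a'\ge(|z|-|z'|)n\ge 2dn>r_{k-1}r_k$ (here $n>r_k^2$ is used). By Lemma~\ref{l:boundsrc}\eqref{l:boundsrc_bound} a minimum-length factorization $v$ of $a'$ in $S$ has $v_k>0$, and since $|v|=\mathsf{m}_S(a')\le|z'|$ the tuple $(|z'|-|v|,v_1,\dots,v_k)$ is a factorization of $\beta$; it shares the atom $n+r_k$ with $z'$, and if $|v|<|z'|$ it would also share $n$ with $z$ — impossible. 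Hence $\mathsf{m}_S(a')=|z'|$. Repeating the argument with $a'':=\beta-(|z'|+d)n=a'-dn$, which again lies in $S$ (it is a multiple of $d$ with $a''\ge dn>r_k^2$) and exceeds $r_{k-1}r_k$: if $\mathsf{m}_S(a'')<|z'|+d$, a minimum-length factorization of $a''$ would lift (via Lemma~\ref{l:boundsrc}\eqref{l:boundsrc_bound}) to a factorization of $\beta$ using both $n$ and $n+r_k$, which is impossible; therefore $\mathsf{m}_S(a'')\ge|z'|+d$.

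The final step compares $\mathsf{m}_S(a'')$ with $\mathsf{m}_S(a')=|z'|$ through Theorem~\ref{t:minfactlen}. Write $dn=qr_k+\varepsilon$ with $0\le\varepsilon<r_k$; then $q=\lfloor dn/r_k\rfloor\ge\lfloor n/r_k\rfloor\ge r_k$ because $n>r_k^2$. Since $a'',a''+r_k,\dots,a''+qr_k=a'-\varepsilon$ all lie in $S$ and exceed $r_{k-1}r_k$, iterating Theorem~\ref{t:minfactlen} gives $\mathsf{m}_S(a'-\varepsilon)=\mathsf{m}_S(a'')+q$. On the other hand, iterating Theorem~\ref{t:minfactlen} downward to the window $(r_{k-1}r_k,(r_{k-1}+1)r_k]$ yields the estimate $\mathsf{m}_S(x)<x/r_k+r_k$ for all $x\in S$ with $x>r_{k-1}r_k$; applied to $x=a'-\varepsilon$, together with $\mathsf{m}_S(a')\ge a'/r_k$, this gives $\mathsf{m}_S(a'-\varepsilon)<\mathsf{m}_S(a')+r_k$. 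Combining, $\mathsf{m}_S(a'')=\mathsf{m}_S(a'-\varepsilon)-q<\mathsf{m}_S(a')+r_k-q\le\mathsf{m}_S(a')=|z'|$, contradicting $\mathsf{m}_S(a'')\ge|z'|+d$.

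I expect the main obstacle to be the ``window'' estimate $\mathsf{m}_S(x)<x/r_k+r_k$ for $x\in S$ just above $r_{k-1}r_k$ — equivalently, bounding the minimal factorization length of elements of $S$ in $(r_{k-1}r_k,(r_{k-1}+1)r_k]$ by roughly $r_k$ (which is immediate for elements of $\langle r_{k-1},r_k\rangle$ from the two‑generator formula $\mathsf{m}_{\langle p,q\rangle}(x)<x/q+(q-p)$, but needs care in general); everything else is bookkeeping with Theorems~\ref{t:minfactlen} and~\ref{t:mesalemma}. It is also plausible that the authors instead extract the bound $|z|-|z'|\le d$ from a sharpened run of the argument proving Theorem~\ref{t:mesalemma}, in which case the way $n>r_k^2$ enters would be threaded through that proof rather than through the estimate above.
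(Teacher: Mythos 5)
Your plan diverges from the paper's proof in its final phase, and the divergence is exactly where the gap lies. The paper's own argument is short: it cites \cite[Proposition~2.9]{delta} for $|z|-|z'|\in d\ZZ$, and then, assuming $|z|-|z'|\ge 2d$, it produces a factorization $z''\in\mathsf Z(\beta)$ with $|z''|=|z'|+d$ and invokes Theorem~\ref{t:mesalemma} on the pairs $(z'',z')$ and $(z,z'')$ to conclude $z''_0>0$ and $z''_k>0$, so that $z''$ is adjacent to both $z$ (through the atom $n$) and $z'$ (through $n+r_k$), a contradiction. Your preliminary steps are sound and in fact sharpen the picture: the observation that no $w\in\mathsf Z(\beta)$ can have $w_0>0$ and $w_k>0$ simultaneously is exactly the right reformulation, your derivation of $d\mid|z|-|z'|$ from primitivity and $\beta-\ell n\in S\subseteq d\ZZ$ is correct and cleanly replaces the citation, and your equalities $\mathsf m_S(a')=|z'|$ and $\mathsf m_S(a'')\ge|z'|+d$ are correct consequences of Lemma~\ref{l:boundsrc}\eqref{l:boundsrc_bound} together with that observation.

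The gap is the ``window estimate'' $\mathsf m_S(x)<x/r_k+r_k$ for $x\in S$ with $x>r_{k-1}r_k$, which you use to force $\mathsf m_S(a''+qr_k)<\mathsf m_S(a')+r_k$. As you wrote it, ``iterating Theorem~\ref{t:minfactlen} downward to the window'' only reduces the claim to showing $\mathsf m_S(x_0)<x_0/r_k+r_k$ for $x_0\in S\cap(r_{k-1}r_k,(r_{k-1}+1)r_k]$, and there is no argument given for that base case. The only immediate upper bound in the window is $\mathsf m_S(x_0)\le x_0/r_1$, which is far larger than $x_0/r_k+r_k$ once $r_1$ is small relative to $r_k$, and Lemma~\ref{l:boundsrc}\eqref{l:boundsrc_bound} stops applying as soon as you drop below $r_{k-1}r_k$, so the downward iteration cannot be continued to shrink the base case further. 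You flagged this as the main risk, and rightly so: it is the one step that is neither proved nor cited. The paper avoids this numerics entirely by extracting $z''_0>0$ and $z''_k>0$ directly from Theorem~\ref{t:mesalemma}; if you want to salvage your route, you would need to either prove the window estimate (or a weaker bound sufficient for the final inequality, for instance anything giving $\mathsf m_S(a''+qr_k)<\mathsf m_S(a')+q$) or, as you anticipated in your last paragraph, fall back to constructing $z''$ and applying Theorem~\ref{t:mesalemma} to the pairs $(z,z'')$ and $(z'',z')$, which is precisely what the authors do.
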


\begin{proof}
By \cite[Proposition~2.9]{delta}, $|z| - |z'| \in d\ZZ$, so suppose by way of contradiction that $|z| - |z'| \ge 2d$.  Since $\beta - |z'|n - n \in S$, there exists a factorization $z'' \in \mathsf Z(\beta)$ with $|z''| = |z'| + d$.  By Theorem~\ref{t:mesalemma}, both $z_0''$ and $z_k''$ must be positive, meaning $z$ and $z'$ are both connected to $z''$ in $\nabla\!_\beta$.  
\end{proof}

\section{Minimal presentations}
\label{sec:minpres}

Let $\pi_n:\NN^{k+1} \to M_n$ denote the factorization homomorphism of $M_n$, that is,
$$\pi_n(z) = z_0n + \sum_{i = 1}^k z_i(n + r_i),$$
and let $\ker \pi_n$ denote the equivalence relation on $\NN^{k+1}$ given by $(z,z') \in \ker\pi_n$ whenever $\pi_n(z) = \pi_n(z')$, (that is, when $z$ and $z'$ are factorizations for the same element in $M_n$).  The equivalence relation $\ker \pi_n$ is a \emph{congruence} since it is also closed under \emph{translation}, that is, $(z + u, z' + u) \in \ker \pi_n$ whenever $(z,z') \in \ker\pi_n$ and $u \in \NN^{k+1}$.  

\begin{defn}\label{d:minpres}
Fix a numerical monoid $M = \<m_1, \ldots, m_t\>$ and let $\pi:\NN^t \to M$ denote the factorization homomorphism of $M$.  A \emph{presentation} for $M$ is a set of relations $\rho \subset \ker \pi$ such that $\ker \pi$ is the unique minimal (w.r.t.\ containment) congruence on $\NN^t$ containing $\rho$.  Equivalently, this is true if between any two factorizations $(z, z') \in \ker \pi$, there exists a \emph{chain} $a_0, a_1, \ldots, a_r$ with $a_0 = z$, $a_r = z'$, and 
$$(a_{i-1},a_i) = (b_i,b_i') + (u_i,u_i) \in \ker \pi$$
for some $(b_i, b_i') \in \rho$ and $u_i \in \NN^t$ for each $i \le r$.  We say $\rho$ is \emph{minimal} if it is minimal with respect to containment among all presentations of $M$.  
\end{defn}

Minimal presentations are one of the fundamental tools with which to study the factorization structure of finitely generated monoids.  Each minimal presentation of a monoid $M$ can be viewed as a particular choice of minimal relations that are sufficient for relating any two factorizations of the elements of $M$.  For a more thorough introduction, we refer the reader to \cite[Chapter~9]{fingenmon} and \cite[Chapter~7]{numerical}.  

\begin{example}\label{e:minpres}
The minimal presentations of $M = \<6,9,20\>$ from Example~\ref{e:betti} are 
$$\begin{array}{ll}
\{((3,0,0), (0,2,0)), ((10,0,0), (0,0,3))\}, & \{((3,0,0), (0,2,0)), ((7,2,0), (0,0,3))\}, \\
\{((3,0,0), (0,2,0)), ((\phantom{0}4,4,0), (0,0,3))\}, & \{((3,0,0), (0,2,0)), ((1,6,0), (0,0,3))\}, 
\end{array}$$
each of which has exactly one relation for each Betti element.  As per the discussion in Example~\ref{e:betti}, each minimal presentation provides enough relations among the minimal generators of $M$ to relate any two factorizations of elements of $M$.
\end{example}

\begin{example}\label{e:minpresmap}
Let $S = \<6,9,20\>$, and consider the following minimal presentations.  
\smaller
$$
\begin{array}{
r@{\,\,\,}
l@{}r@{\,\,}r@{\,\,}r@{\,\,}r@{\,}r@{\,\,}r@{\,\,}r@{\,}r@{}l@{\,\,}
l@{}r@{\,\,}r@{\,\,}r@{\,\,}r@{\,}r@{\,\,}r@{\,\,}r@{\,}r@{}l@{\,\,}
l@{}r@{\,\,}r@{\,\,}r@{\,\,}r@{\,}r@{\,\,}r@{\,\,}r@{\,}r@{}l@{\,\,}
}
M_{450}:
& (( &  0, & 0, & 8, & 0), & (3, & 2, & 0, & 3  & )),
& (( &  0, & 1, & 6, & 0), & (4, & 0, & 0, & 3  & )),
& (( &  0, & 3, & 0, & 0), & (1, & 0, & 2, & 0  & )),
\\
& (( & 20, & 5, & 0, & 0), & (0, & 0, & 0, & 24 & )),
& (( & 25, & 1, & 0, & 0), & (0, & 0, & 4, & 21 & )),
& (( & 26, & 0, & 0, & 0), & (0, & 2, & 2, & 21 & ))
\\[0.5em]

M_{470}:
& (( &  0, & 0, & 8, & 0), & (3, & 2, & 0, & 3  & )),
& (( &  0, & 1, & 6, & 0), & (4, & 0, & 0, & 3  & )),
& (( &  0, & 3, & 0, & 0), & (1, & 0, & 2, & 0  & )),
\\
& (( & 21, & 5, & 0, & 0), & (0, & 0, & 0, & 25 & )),
& (( & 26, & 1, & 0, & 0), & (0, & 0, & 4, & 22 & )),
& (( & 27, & 0, & 0, & 0), & (0, & 2, & 2, & 22 & ))
\\[0.5em]

M_{490}:
& (( &  0, & 0, & 8, & 0), & (3, & 2, & 0, & 3  & )),
& (( &  0, & 1, & 6, & 0), & (4, & 0, & 0, & 3  & )),
& (( &  0, & 3, & 0, & 0), & (1, & 0, & 2, & 0  & )),
\\
& (( & 22, & 5, & 0, & 0), & (0, & 0, & 0, & 26 & )),
& (( & 27, & 1, & 0, & 0), & (0, & 0, & 4, & 23 & )),
& (( & 28, & 0, & 0, & 0), & (0, & 2, & 2, & 23 & ))
\end{array}
$$
\normalsize
Each first-row relation $(z,z')$ satisfies $|z| = |z'|$, and each second-row relation $(z,z')$ satisfies $|z| = |z'| + 1$, $z_0 > 0$ and $z_3' > 0$.  Theorem~\ref{t:mesalemma} ensures that every relation above satisfies one of these two criteria.  

Theorem~\ref{t:minpresbij} characterizes the relationship between successive minimal presentations.  In particular, the same equal-length relations appear in all three given minimal presentations, and each remaining relation for $M_{n+20}$ is obtained from a relation for $M_n$ by adding $(e_0, e_k)$.  
\end{example}

Proposition~\ref{p:mapwelldefined} defines the map $\Phi_n$ used to construct the bijection between minimal presentations in Theorem~\ref{t:minpresbij}, and Proposition~\ref{p:mapproperties} gives several key properties of $\Phi_n$.  In particular, it is shown that $\Phi_n$ preserves symmetric and translation closure, and preserves monotone chain connectivity (Definition~\ref{d:monotonechain}).  

\begin{prop}\label{p:mapwelldefined}
The map $\Phi_n \colon \ker \pi_n \to \ker \pi_{n+r_k}$ given by
$$\Phi_n(z,z')
= \left\{\begin{array}{lll}
(z + \ell e_0, z' + \ell e_k) & \text{if } & |z| > |z'| \\
(z + \ell e_k, z' + \ell e_0) & \text{if } & |z| < |z'| \\
(z,z')                        & \text{if } & |z| = |z'|
\end{array}\right.
$$
for $(z,z') \in \ker \pi_n$ and  $\ell = \big| |z| - |z'| \big|$ is well defined.
\end{prop}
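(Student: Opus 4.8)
The plan is to verify the only non-formal point: that $\Phi_n(z,z')$ actually lands in $\ker\pi_{n+r_k}$. The three cases in the definition are mutually exclusive and exhaustive, so $\Phi_n$ is at least a genuine function, and in each case the output is obtained from $(z,z')$ by adding non-negative multiples of standard basis vectors, so it stays in $\NN^{k+1}\times\NN^{k+1}$. Thus it only remains to check that the two coordinates of $\Phi_n(z,z')$ have equal image under $\pi_{n+r_k}$.

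The first step I would take is to record the identities $\pi_n(z)=|z|\,n+\sum_{i=1}^k z_i r_i$ and, writing $n'=n+r_k$, $\pi_{n'}(w)=|w|\,n'+\sum_{i=1}^k w_i r_i$; the only difference between these two formulas is that $n$ is replaced by $n'$ in the length term. From this, the equal-length case is immediate: if $|z|=|z'|$ then $\pi_n(z)=\pi_n(z')$ forces $\sum_{i=1}^k z_i r_i=\sum_{i=1}^k z_i' r_i$, and substituting into the formula for $\pi_{n'}$ gives $\pi_{n'}(z)=\pi_{n'}(z')$, as desired.

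For the case $|z|>|z'|$, set $\ell=|z|-|z'|$. Adding $\ell e_0$ to $z$ raises the length by $\ell$ and leaves $z_1,\dots,z_k$ untouched, while adding $\ell e_k$ to $z'$ raises the length by $\ell$ and raises $z_k'$ by $\ell$. Expanding $\pi_{n'}(z+\ell e_0)-\pi_{n'}(z'+\ell e_k)$ with the formula above, the length terms contribute $(|z|-|z'|)\,n'=\ell(n+r_k)$, the extra $\ell$ copies of the last atom contribute $-\ell r_k$, and altogether the difference collapses to $\ell n+\big(\sum_{i=1}^k z_i r_i-\sum_{i=1}^k z_i' r_i\big)$. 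But $\pi_n(z)=\pi_n(z')$ says precisely that $\sum z_i r_i-\sum z_i' r_i=(|z'|-|z|)n=-\ell n$, so the difference is $0$ and $\Phi_n(z,z')\in\ker\pi_{n'}$. The case $|z|<|z'|$ is the same computation with the roles of $z$ and $z'$ interchanged.

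There is no real obstacle here; it is a bookkeeping verification. The one point worth stating explicitly is why the shift vectors are $e_0$ on the longer side and $e_k$ on the shorter side: passing from $M_n$ to $M_{n+r_k}$ inflates the value of $\pi$ by $r_k$ per atom, i.e.\ by $r_k$ times the factorization length, so appending $\ell$ copies of the smallest atom to the longer factorization and $\ell$ copies of the largest atom to the shorter one is exactly the adjustment that preserves the length gap $\ell$ while cancelling the resulting $\ell r_k$ discrepancy on the side that carries the extra $r_k$-atom.
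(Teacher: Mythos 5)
Your verification is correct and follows essentially the same route as the paper: both reduce to the identity $\pi_m(w)=|w|\,m+\sum_{i=1}^k w_i r_i$ and check directly that $\pi_{n+r_k}(z+\ell e_0)=\pi_{n+r_k}(z'+\ell e_k)$ using $\pi_n(z)=\pi_n(z')$, the paper handling the sign cases at once by symmetry. No gaps.
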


\begin{proof}
Fix $(z,z') \in \ker \pi_n$ with $z = (z_0, \dots, z_k)$ and $z' = (z_0', \dots, z_k')$.  By symmetry, we can assume that $\ell = |z| - |z'| \ge 0$.  Now, we simply use $\pi_n(z) = \pi_n(z')$ to verify that 
$$\begin{array}{r@{}c@{}l}
\displaystyle \pi_{n+r_k}(z+de_0)
&{}={}& \displaystyle
(|z| + \ell)(n + r_k) + \sum_{i = 1}^k z_ir_i
= (|z'| + 2\ell)(n + r_k) - \ell n + \sum_{i=1}^k z_i'r_i
\\ &{}={}& \displaystyle
(|z'| + \ell)(n + r_k) + \ell r_k + \sum_{i=1}^k z_i'r_i
= \pi_{n + r_k}(z' + \ell e_k),
\end{array}$$
as desired.  
\end{proof}

\begin{defn}\label{d:monotonechain}
A chain $z = a_0, a_1, \ldots, a_r = z'$ between factorizations $z, z'$ in a numerical monoid is \emph{monotone} if $|a_0|, |a_1|, \ldots, |a_r|$ is a monotone sequence.  
\end{defn}

\begin{prop}\label{p:mapproperties}
Fix $n$, $\rho \subset \ker \pi_n$, and $(z, z') \in \rho$, and let $(w, w') = \Phi_n(z, z')$.  
\begin{enumerate}[(a)]
\item
\label{p:mapproperties_injective}
The map $\Phi_n$ is injective.  

\item 
\label{p:mapproperties_lendiffs}
The map $\Phi_n$ preserves length differences: $|z| - |z'| = |w| - |w'|$.  

\item 
\label{p:mapproperties_closures}
The map $\Phi_n$ preserves the reflexive, symmetric, and translation closure operations: if $\rho$ is reflexive, symmetric, or closed under translation, then so is $\Phi_n(\rho)$.  

\item 
\label{p:mapproperties_monotonechains}
The map $\Phi_n$ preserves monotone chain connectivity: if $\rho$ is translation-closed and there exists a monotone $\rho$-chain from $z$ to $z'$, then there exists a monotone $\Phi_n(\rho)$-chain from $w$ to $w'$.  

\end{enumerate}
\end{prop}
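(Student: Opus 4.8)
The plan is to treat parts \eqref{p:mapproperties_lendiffs} and \eqref{p:mapproperties_injective} in a couple of lines each, dispatch \eqref{p:mapproperties_closures} with three short verifications, and reserve the real work for \eqref{p:mapproperties_monotonechains}. For \eqref{p:mapproperties_lendiffs}: in each of the three branches defining $\Phi_n$ the two coordinates of the output are obtained from those of the input by adjoining the same number $\ell$ of atoms (or, in the equal-length branch, none), so $|w| - |w'| = |z| - |z'|$ by inspection. Part \eqref{p:mapproperties_injective} then follows: given $(w,w') = \Phi_n(z,z')$, part \eqref{p:mapproperties_lendiffs} recovers $\ell = \big| |w| - |w'| \big|$ together with the sign of $|z| - |z'|$, hence determines which branch of $\Phi_n$ produced $(w,w')$; subtracting $\ell e_0$ or $\ell e_k$ from the appropriate coordinate then recovers $(z,z')$ uniquely.

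For \eqref{p:mapproperties_closures} I would isolate three elementary facts about $\Phi_n$. First, $\Phi_n$ fixes every diagonal pair, $\Phi_n(x,x) = (x,x)$, since $|x| = |x|$; hence if $\rho$ contains all diagonal pairs then so does $\Phi_n(\rho)$, giving reflexivity. Second, $\Phi_n$ intertwines the coordinate swap: inspecting the three branches shows that $\Phi_n(z',z)$ is obtained from $\Phi_n(z,z')$ by exchanging its two coordinates, so symmetry of $\rho$ passes to $\Phi_n(\rho)$. Third, $\Phi_n$ is equivariant for the diagonal translation action, $\Phi_n(z+v, z'+v) = \Phi_n(z,z') + (v,v)$ for every $v \in \NN^{k+1}$, because adding $v$ to both coordinates changes neither $|z| - |z'|$ nor the active branch and commutes with adjoining $\ell e_0$ or $\ell e_k$; so translation-closure of $\rho$ passes to $\Phi_n(\rho)$.

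The substance lies in \eqref{p:mapproperties_monotonechains}. Since $\rho$ is translation-closed, a monotone $\rho$-chain $z = a_0, a_1, \dots, a_r = z'$ is just a sequence with $(a_{i-1},a_i) \in \rho$ for all $i$ and $|a_0|, \dots, |a_r|$ monotone; assume without loss of generality (the other case being symmetric under exchanging $e_0$ and $e_k$) that this sequence is non-increasing, and put $\ell = |z| - |z'| = |a_0| - |a_r| \ge 0$. For each $i$ set $c_i = |a_0| - |a_i|$ (non-decreasing, with $c_0 = 0$, $c_r = \ell$) and $d_i = |a_i| - |a_r| = \ell - c_i$ (non-increasing, with $d_0 = \ell$, $d_r = 0$), and define the lift $b_i = a_i + d_i e_0 + c_i e_k$. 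Then $b_0 = z + \ell e_0 = w$ and $b_r = z' + \ell e_k = w'$, while $|b_i| = |a_i| + \ell$, so the lengths $|b_0|, \dots, |b_r|$ are again monotone; a direct computation using that $\pi_n(a_i)$ is independent of $i$ shows each $b_i$ is a factorization of the single element $\pi_{n+r_k}(w) \in M_{n+r_k}$, so that $b_0, \dots, b_r$ is a chain of factorizations. Finally, with $\ell_i = |a_{i-1}| - |a_i| = c_i - c_{i-1} = d_{i-1} - d_i \ge 0$ we have $\Phi_n(a_{i-1},a_i) = (a_{i-1} + \ell_i e_0, a_i + \ell_i e_k) \in \Phi_n(\rho)$, and the identities $d_{i-1} - \ell_i = d_i$ and $c_i - \ell_i = c_{i-1}$ give $(b_{i-1}, b_i) = \Phi_n(a_{i-1},a_i) + (v_i, v_i)$ with $v_i = d_i e_0 + c_{i-1} e_k \in \NN^{k+1}$; hence $b_0, \dots, b_r$ is a monotone $\Phi_n(\rho)$-chain from $w$ to $w'$.

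I expect the only real obstacle to be finding the lift in \eqref{p:mapproperties_monotonechains}. The vector $b_i = a_i + d_i e_0 + c_i e_k$ is engineered to transfer the ``excess length'' $\ell$ gradually from the coordinate $e_0$, where it resides in $w$, to the coordinate $e_k$, where it resides in $w'$, keeping pace with the drop in $|a_i|$; once it is written down, monotonicity, the chain property, and $\Phi_n(\rho)$-compatibility all reduce to bookkeeping with $c_i$, $d_i$, and $\ell_i$. The one step worth carrying out with care is confirming that each $b_i$ is a genuine factorization over $M_{n+r_k}$, which follows routinely from $\pi_n(a_i) = \pi_n(z)$ exactly as in the proof of Proposition~\ref{p:mapwelldefined}.
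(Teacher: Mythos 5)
Your proof is correct and essentially matches the paper's: parts (a)--(c) proceed by the same direct verifications, and your closed-form lift $b_i = a_i + d_i e_0 + c_i e_k$ in part (d) is precisely the chain the paper constructs by induction on chain length, the only difference being that the paper translates each relation by a suitable multiple of $e_0$ or $e_k$ \emph{before} applying $\Phi_n$, whereas you translate by $(v_i, v_i)$ \emph{afterward}. The two formulations agree because $\Phi_n$ commutes with diagonal translation, which is exactly your part (c).
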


\begin{proof}
It is easy to check that $|z| - |z'| = |w| - |w'|$, from which injectivity follows.  Both $\Phi(z,z) = (z,z)$ and $\Phi(z',z) = (w',w)$ follow directly from definitions as well.  Next, fixing $u \in \NN^{k+1}$ and assuming by symmetry that $\ell = |z| - |z'| \ge 0$, we have
$$\Phi_n(z + u, z' + u) = (z + u + \ell e_0, z' + u + \ell e_k) = (z + \ell e_0, z' + \ell e_k) + (u,u) = \Phi_n(z,z') + \Phi_n(u,u).$$
It remains to prove the final claim.  

Suppose $\rho$ is translation-closed and there is a monotone decreasing $\rho$-chain from $z$ to $z'$.  By induction on chain length, we can assume there is a single intermediate factorization $z''$.  Letting $\ell = |z| - |z''| \ge 0$ and $\ell' = |z''| - |z'| \ge 0$, we have
$$\Phi_n(z + \ell'e_0, z'' + \ell'e_0) = (z + (\ell + \ell')e_0, z'' + \ell'e_0 + \ell e_k)$$
and
$$\Phi_n(z'' + \ell e_k, z' + \ell e_k) = (z'' + \ell e_k + \ell 'e_0, z' + (\ell + \ell')e_k)$$
which form a monotone decreasing $\Phi_n(\rho)$-chain from $w$ to $w'$.  
\end{proof}

The main obstruction to Theorem~\ref{t:minpresbij} for arbitrary $n$ is that $\Phi_n$ needs only preserve connectivity by monotone chains.  Proposition~\ref{p:monotonechain} ensures that for $n$ sufficiently large, any pair of factorizations $(z,z') \in \ker \pi_n$ is connected by a monotone chain, and Example~\ref{e:nonmonotonechain} demonstrates why this can fail for small $n$.  

\begin{example}\label{e:nonmonotonechain}
Let $S = \<3,14\>$.  The element $1078 \in M_{74}$ has factorization set
$$\mathsf Z_{M_{74}}(1078) = \{(0, 14, 0), (11, 0, 3), (0, 6, 7)\}.$$
Notice that the only chains between $(0, 14, 0)$ and $(11, 0, 3)$ are a monotone chain directly between them and a non-monotone chain through $(0, 6, 7)$.  Since both relations in the non-monotone chain are translations, no minimal presentation of $M_{74}$ contains the relation $((0, 14, 0), (11, 0, 3))$.  On the other hand, we have
$$\mathsf Z_{M_{88}}(1274) = \{(0, 14, 0), (11, 0, 3)\},$$
so every minimal presentation of $M_{88}$ contains the relation $((0, 14, 0), (11, 0, 3))$.  
\end{example}

\begin{prop}\label{p:monotonechain}
Fix $n > r_k^2$ and a minimal presentation $\rho \subseteq \ker \pi_n$.  There exists a monotone $\rho$-chain between any $(z,z') \in \ker \pi_n$.
\end{prop}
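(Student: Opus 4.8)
The plan is to reduce everything to a statement about factorizations in $S$, using the length-shifting bijection already exploited in the proof of Theorem~\ref{t:mesalemma}: a factorization $z$ of an element $a \in M_n$ of length $\ell$ corresponds to a factorization $(z_1,\ldots,z_k)$ of $a - \ell n \in S$ of length at most $\ell$ (the deficit $\ell - |(z_1,\ldots,z_k)|$ being absorbed into the $z_0$ coordinate). Fix $(z,z') \in \ker\pi_n$, and assume without loss of generality $|z| \ge |z'|$. We want a monotone (hence, by the structure of $\rho$, monotone decreasing from $z$) $\rho$-chain. The key idea is to first route both $z$ and $z'$ to a common ``canonical'' factorization that is as long as possible and uses $n$ heavily — namely, the one obtained by replacing the $S$-part with a minimum-length factorization of $a - |z|n \in S$ (resp.\ $a - |z'|n$) and padding the rest with copies of $n$.

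The steps, in order: (1) Since $n > r_k^2$, Lemma~\ref{l:boundsrc}\eqref{l:boundsrc_lenbound} gives $|s| \ge r_k$ for the $S$-part $s$ of any factorization of any element $a \in M_n$ that is written with at least one copy of $n$ removed; more useful is that for the element $a = \pi_n(z)$, since $a - |z|n \in S$ and (when $|z| < $ something) stays large, we can invoke Lemma~\ref{l:boundsrc}\eqref{l:boundsrc_bound},\eqref{l:boundsrc_rk}. (2) Show that any factorization $z$ of $a$ is connected to a factorization $\hat z$ of $a$ with $|\hat z| = |z|$, $\hat z_0 \ge r_k$, and $\hat z_k > 0$, via a monotone (constant-length) $\rho$-chain: use Lemma~\ref{l:boundsrc}\eqref{l:boundsrc_rk} repeatedly to push the $S$-part below length $r_k$ (each such replacement is length-preserving on $M_n$ since we only move mass within the $S$-coordinates and into $z_0$, wait—actually a length-nonincreasing move), and note each elementary exchange used is a translate of a relation in the minimal presentation $\rho$ because minimal presentations of numerical monoids contain a generating set of relations indexed by Betti elements. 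The honest way: argue that constant-length connectivity within $\mathsf Z(a)$ is witnessed by $\rho$-chains that stay at constant length, which follows because $\rho$ is a presentation and the relations changing length are exactly the ``second-row'' type from Theorem~\ref{t:mesalemma} with $z_0 > 0, z_k' > 0$, so they can be deferred. (3) Once $z, z'$ are replaced by $\hat z, \hat z'$ with positive first and last coordinates, connect $\hat z$ down to $\hat z'$: since $\hat z_0 > 0$ we can peel off copies of $n$ and reabsorb them, and the difference in length is accounted for by elements of $S$ sitting above $r_{k-1}r_k$, so Lemma~\ref{l:boundsrc}\eqref{l:boundsrc_bound} and Theorem~\ref{t:mesalemma} let each length-decreasing step be taken using a relation whose long side has positive $0$-coordinate and whose short side has positive $k$-coordinate — matching the common support needed to splice consecutive relations into a monotone chain.

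The main obstacle I expect is step (2)–(3): making precise that a minimal presentation $\rho$ suffices to realize these specific moves \emph{monotonically}. A general presentation connects $(z,z')$ by \emph{some} chain, not a monotone one, and Example~\ref{e:nonmonotonechain} shows monotonicity genuinely fails for small $n$. The resolution must use $n > r_k^2$ crucially: when $n$ is this large, every Betti element $\beta$ of $M_n$ has its factorizations split by Theorem~\ref{t:mesalemma} into length classes differing by at most $d = \gcd(r_i)$ (Corollary~\ref{c:bettilengthset}), with the longer representatives always carrying a positive $n$-coordinate and the shorter ones a positive $(n+r_k)$-coordinate. So every relation in $\rho$ is either length-preserving or is of the ``descending'' type with compatible supports; a chain built from such relations can always be rearranged so that all length-preserving steps are grouped and all descending steps go the same direction, yielding monotonicity. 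I would prove this rearrangement lemma by induction on the number of ``ascents followed by descents'' in a given $\rho$-chain, using the support conditions from Theorem~\ref{t:mesalemma} to show a local ascent–descent pair can be replaced by a descent–ascent pair (or eliminated), which is exactly where $n > r_k^2$ is consumed.
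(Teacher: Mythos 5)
Your plan is a different route from the paper's: you want to constructively route $z$ and $z'$ to canonical factorizations and then \emph{rearrange} an arbitrary $\rho$-chain into a monotone one, whereas the paper argues by contradiction, assuming $\gcd(z,z')=0$ and extracting an innermost ``bump'' in the length sequence of a $\rho$-chain. Both proofs ultimately hinge on the same ingredient — Theorem~\ref{t:mesalemma} applied to the two length-changing steps flanking a local extremum — but the paper uses it to force both global endpoints to share a positive $k$-th (or $0$-th) coordinate, directly contradicting coprimality, and never has to exhibit a replacement chain.

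The load-bearing step in your proposal, however, has a genuine gap. You claim that ``a local ascent--descent pair can be replaced by a descent--ascent pair (or eliminated),'' and separately that ``constant-length connectivity within $\mathsf Z(a)$ is witnessed by $\rho$-chains that stay at constant length.'' Neither is a consequence of the cited tools. If $a_{i-1},a_i,a_{i+1}$ is a local length-maximum, Theorem~\ref{t:mesalemma} together with Corollary~\ref{c:bettilengthset} does give $|a_{i-1}|=|a_{i+1}|$ and $(a_{i-1})_k>0$, $(a_{i+1})_k>0$, so the flanking factorizations share an irreducible — but that only puts them in the same connected component of $\nabla_a$ for $a=\pi_n(a_{i-1})$, which is generally \emph{not} a Betti element. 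Shared support does \emph{not} give you a single relation of $\rho$ (nor a short one) connecting $a_{i-1}$ and $a_{i+1}$ through a factorization of smaller length, so the bump cannot be flattened locally. To flatten it you would have to invoke Proposition~\ref{p:monotonechain} itself (in its equal-length form) on the pair $(a_{i-1}-g, a_{i+1}-g)$ with $g=\gcd(a_{i-1},a_{i+1})$, i.e.\ on a strictly smaller element of $M_n$. In other words, your ``rearrangement lemma'' is really an outer induction on $\pi_n(z)$ in disguise, and without setting up that induction your step~(2) simply presupposes the statement being proved. The same circularity affects your step~(3): peeling off copies of $n$ and reabsorbing them only gives a $\rho$-chain after the proposition is already available.

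If you do set up the induction on $\pi_n(z)$ explicitly (subtract the common $\gcd$ of the two factorizations flanking a bump, apply the inductive hypothesis to the smaller element, translate back, and splice in the resulting constant-length chain to reduce a suitable complexity measure such as the lexicographic pair of maximum chain length and number of indices attaining it), your approach can be made to work and is a legitimate constructive variant. But as written, the key lemma is asserted rather than proved, and the assertion as phrased (``replaced by a descent--ascent pair or eliminated'') overclaims: there need not be a single intermediate factorization of smaller length. Compare this with the paper's shorter route, which sidesteps the replacement entirely: once the innermost bump is isolated, Theorem~\ref{t:mesalemma} shows both of its endpoints carry $n+r_k$, which already contradicts the WLOG assumption $\gcd(z,z')=0$.
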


\begin{proof}
Without loss of generality, assume $\gcd(z,z') = 0$.  By way of contradiction, assume there is no monotone $\rho$-chain from $z$ to $z'$.  Since $\Cong(\rho) = \ker \pi_n$, there exists a chain $z = a_0, a_1, \ldots, a_r = z'$ of factorizations such that for each $i < r$, we have 
$$(a_i,a_{i+1}) = (b_i,b_i') + (u_i,u_i), \qquad (b_i, b_i') \in \rho, u_i \in \NN^{k+1},$$
where $b_i$ and $b_i'$ occur in distinct connected components of the graph $\nabla\!_\beta$ of $\beta = \pi_n(b_i)$.  

By Corollary~\ref{c:bettilengthset}, we have $|a_i| - |a_{i-1}| \in \{-1,0,1\}$ for each $i \le r$.  As such, the sequence $|a_0|, |a_1|, \ldots, |a_r|$ of factorization lengths has non-sequential repeated values.  Without loss of generality, we can replace $z'$ with the factorization whose length is the first non-sequential repeated value in this sequence, and replace $z$ with the last factorization before $z'$ with $|z| = |z'|$.  As such, $|a_0| = |a_r| = |a_1| = |a_{r-1}|$, and $|a_i| = |a_{i-1}|$ whenever $1 < i < r$.  

First, suppose $|a_1| = |a_0| + 1$.  Applying Theorem~\ref{t:mesalemma} to the pairs $(b_1,b_1')$ and $(b_r,b_r')$, we see that $z_k > 0$ and $z_k' > 0$, which contradict the assumption that $\gcd(z,z') = 0$.  Likewise, if $|a_1| = |a_0| - 1$, then Theorem~\ref{t:mesalemma} implies that $z_0 > 0$ and $z_0' > 0$, which again contradict the assumption that $\gcd(z,z') = 0$.  This completes the proof.  
\end{proof}

Together, Propositions~\ref{p:mapproperties} and~\ref{p:monotonechain} yield Theorem~\ref{t:minpresbij}, the main result of this section.  

\begin{thm}\label{t:minpresbij}
For any $n > r_k^2$, the image of any minimal presentation $\rho$ of $M_n$ under the map $\Phi_n:\ker \pi_n \to \ker \pi_{n + r_k}$ is a minimal presentation of $M_{n + r_k}$.  In particular, $\Phi_n$~induces a one-to-one correspondence between the minimal presentations of $M_n$ and the minimal presentations of $M_{n+r_k}$.  
\end{thm}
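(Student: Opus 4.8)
The plan is to establish three facts and then combine them: (1)~if $\rho$ is a minimal presentation of $M_n$, then the congruence generated by $\Phi_n(\rho)$ equals $\ker\pi_{n+r_k}$, so $\Phi_n(\rho)$ is a presentation of $M_{n+r_k}$; (2)~every relation occurring in a minimal presentation of $M_{n+r_k}$ lies in the image of $\Phi_n$, so $\Phi_n^{-1}$ makes sense on such presentations; and (3)~if $\sigma$ is a minimal presentation of $M_{n+r_k}$, then $\Phi_n^{-1}(\sigma)$ is a presentation of $M_n$. I would use freely that $n+r_k>r_k^2$, that $\gcd(n+r_k,r_1,\dots,r_k)=\gcd(n,r_1,\dots,r_k)=1$, and that $M_{n+r_k}$ is---like $M_n$---minimally generated as written (since $n>r_k$), so that Theorem~\ref{t:mesalemma}, Corollary~\ref{c:bettilengthset}, and Proposition~\ref{p:monotonechain} apply verbatim to $M_{n+r_k}$; I would also invoke the standard facts that every presentation of a numerical monoid contains a minimal one and that all of its minimal presentations have the same finite cardinality \cite{fingenmon,numerical}.

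For~(1) I would argue as follows. A $\rho$-chain in the sense of Definition~\ref{d:minpres} is a chain for the translation-closure of $\rho$, so Proposition~\ref{p:mapproperties}\eqref{p:mapproperties_closures},\eqref{p:mapproperties_monotonechains} carries a monotone $\rho$-chain between $z$ and $z'$ to a monotone $\Phi_n(\rho)$-chain between $w$ and $w'$, where $(w,w')=\Phi_n(z,z')$. Since every pair of $\ker\pi_n$ is joined by such a chain (Proposition~\ref{p:monotonechain}), the congruence generated by $\Phi_n(\rho)$ contains all of $\Phi_n(\ker\pi_n)$; by~(2) this image contains a minimal presentation of $M_{n+r_k}$, hence the congruence generated by $\Phi_n(\rho)$ is all of $\ker\pi_{n+r_k}$. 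Fact~(3) I would prove symmetrically: a monotone $\sigma$-chain between the two coordinates of $\Phi_n(z,z')$---which exists by Proposition~\ref{p:monotonechain} applied in $M_{n+r_k}$---should pull back along $\Phi_n^{-1}$, which likewise respects length differences and translations, to a monotone $\Phi_n^{-1}(\sigma)$-chain from $z$ to $z'$; this is the converse of Proposition~\ref{p:mapproperties}\eqref{p:mapproperties_monotonechains} and needs the same kind of bookkeeping.

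The real content is~(2), which I expect to be the main obstacle. Let $(w,w')$ be a relation of a minimal presentation of $M_{n+r_k}$, so $w$ and $w'$ lie in distinct connected components of $\nabla\!_\gamma$ with $\gamma=\pi_{n+r_k}(w)$; then $\gcd(w,w')=0$, and, taking $|w|>|w'|$, Corollary~\ref{c:bettilengthset} gives $\ell:=|w|-|w'|=d$ (where $d=\gcd(r_1,\dots,r_k)$) while Theorem~\ref{t:mesalemma} gives $w_0>0$ and $w'_k>0$, hence $w_k=w'_0=0$. The computation inside the proof of Theorem~\ref{t:mesalemma} (via Lemma~\ref{l:boundsrc}\eqref{l:boundsrc_rk},\eqref{l:boundsrc_lenbound}) already forces $w_0\ge\ell$; the remaining point is $w'_k\ge\ell$. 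Writing $s'=(w'_1,\dots,w'_k)\in\mathsf Z_S(a')$ with $a':=\pi_{n+r_k}(w')-|w'|(n+r_k)\ge d(n+r_k)$, suppose $w'_k<d$. Then $a'-w'_kr_k\ge d(n+r_k)-(d-1)r_k=dn+r_k>r_k^2$, so Lemma~\ref{l:boundsrc}\eqref{l:boundsrc_lenbound} forces the factorization $s'-w'_ke_k$ of $a'-w'_kr_k$---which has last coordinate $0$---to have length at least $r_k$, and then Lemma~\ref{l:boundsrc}\eqref{l:boundsrc_rk} produces a strictly shorter $S$-factorization of $a'-w'_kr_k$ with positive last coordinate. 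Adding back $w'_k$ copies of $r_k$ and lifting to $M_{n+r_k}$ yields a factorization of $\gamma$ with positive first and last coordinates, hence joined by an edge to both $w$ (as $w_0>0$) and $w'$ (as $w'_k>0$)---contradicting that $w$ and $w'$ lie in different components of $\nabla\!_\gamma$. So $w'_k\ge\ell$, and $(w-\ell e_0,\,w'-\ell e_k)\in\ker\pi_n$ is sent to $(w,w')$ by $\Phi_n$.

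To finish, I would combine these: by~(1) the common cardinality of minimal presentations of $M_{n+r_k}$ is at most that of $M_n$, and by~(2)--(3) (applied to $\Phi_n^{-1}(\sigma)$ for $\sigma$ a minimal presentation of $M_{n+r_k}$) it is at least that of $M_n$, so the two coincide. Hence $\Phi_n(\rho)$, being a presentation of minimal possible size, is a minimal presentation of $M_{n+r_k}$, and every minimal presentation $\sigma$ of $M_{n+r_k}$ equals $\Phi_n\bigl(\Phi_n^{-1}(\sigma)\bigr)$ with $\Phi_n^{-1}(\sigma)$ a minimal presentation of $M_n$; combined with the injectivity of $\Phi_n$ (Proposition~\ref{p:mapproperties}\eqref{p:mapproperties_injective}), this is the claimed one-to-one correspondence. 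The principal difficulty is concentrated in~(2) above (a new computation in the spirit of Theorem~\ref{t:mesalemma}) and in the ``reflection'' argument for~(3), where the full force of the monotone-chain property from Proposition~\ref{p:monotonechain} is needed a second time.
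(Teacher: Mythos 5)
Your facts (1) and (2) are correct, and they essentially reproduce the first half of the paper's argument: the paper likewise reduces to showing that any relation between factorizations lying in distinct components of a factorization graph of $M_{n+r_k}$ is in the image of $\Phi_n$, using Corollary~\ref{c:bettilengthset} and Theorem~\ref{t:mesalemma} applied to $M_{n+r_k}$. Your explicit verification that $w'_k\ge d$ (where the paper only asserts that ``$w_0,w'_k\ge d$ follows from the proof of Theorem~\ref{t:mesalemma}'') is correct and fills in a real detail, and the equal-length case you skip is a one-line observation. The difficulty is concentrated, as you suspected, in fact (3) --- and there your sketch does not go through as stated.

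You claim that a monotone $\sigma$-chain from $w=z+\ell e_0$ to $w'=z'+\ell e_k$ ``pulls back along $\Phi_n^{-1}$ with the same kind of bookkeeping'' as Proposition~\ref{p:mapproperties}\eqref{p:mapproperties_monotonechains}. It does not: the forward direction only ever \emph{adds} multiples of $e_0$ and $e_k$, while the pullback must \emph{subtract} them from the intermediate factorizations. Concretely, if $a_0,\dots,a_r$ is the monotone decreasing $\sigma$-chain and $\delta_i=|a_0|-|a_i|$, the only translation of $a_i$ by $-e_0$'s and $-e_k$'s that lands in $\mathsf Z_{M_n}(\pi_n(z))$ is $a_i-(\ell-\delta_i)e_0-\delta_i e_k$, so the pullback requires $(a_i)_0\ge\ell-\delta_i$ and $(a_i)_k\ge\delta_i$ for every $i$. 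An arbitrary monotone chain need not satisfy these bounds: the equal-length steps occurring before (resp.\ after) a length drop can strip every copy of the first (resp.\ last) generator from an intermediate factorization, making the subtraction leave $\NN^{k+1}$. Theorem~\ref{t:mesalemma} controls only the coordinates of the relations $(b_i,b_i')$ at length drops, not of the translated factorizations $a_i=b_i'+u_i$ in between, so establishing (3) needs a genuinely new argument (either re-choosing the chain or another input in the spirit of Theorem~\ref{t:mesalemma}); without it, the cardinality comparison that is supposed to deliver minimality and the one-to-one correspondence does not close. The paper avoids inverting chains altogether: it shows, via Propositions~\ref{p:mapproperties}\eqref{p:mapproperties_monotonechains} and~\ref{p:monotonechain}, that $\Phi_n$ preserves and reflects the property of two factorizations lying in distinct connected components of $\nabla\!_\beta$, which is exactly what characterizes the relations eligible for a minimal presentation, and deduces both minimality of $\Phi_n(\rho)$ and the correspondence from that. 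Replacing your (3) and the counting step with this component-preservation argument would repair the proposal.
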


\begin{proof}
We begin by showing that any minimal presentation $\rho \subset \ker \pi_n$ of $M_n$ satisfies
$$\Cong(\Phi_n(\rho)) = \ker \pi_{n + r_k},$$
that is, the image of $\rho$ under $\Phi_n$ is a presentation for $M_{n + r_k}$.  Fix $(w,w') \in \ker \pi_{n+r_k}$, and let $m = \pi_{n + r_k}(w)$.  By Proposition~\ref{p:monotonechain}, there exists a monotone chain from $w$ to $w'$, which we can assume is monotone decreasing by Proposition~\ref{p:mapproperties}\eqref{p:mapproperties_closures}.  We can also assume each step in this chain has the form $(b,b') + (u,u)$ for some $u \in \NN^{k+1}$ and $b, b' \in \mathsf Z(\beta)$ lying in different connected components of $\nabla\!_\beta$.  By Proposition~\ref{p:mapproperties}\eqref{p:mapproperties_closures}, it suffices to prove each $(b,b')$ lies in the image of $\Phi_n$, so it is enough to assume $w$ and $w'$ lie in different connected components of $\nabla\!_m$.  

First, if $|w| = |w'|$, then $\Phi_n(w,w') = (w,w')$ by Proposition~\ref{p:mapwelldefined}.  Otherwise, Corollary~\ref{c:bettilengthset} implies $|w| = |w'| + d$, where $d = \gcd(r_1, \ldots, r_k)$, and $w_0, w_k' \ge d$ follows from the proof of Theorem~\ref{t:mesalemma}.  This means $\Phi_n(w - de_0, w' - de_k) = (w,w')$, which proves $\Phi_n(\rho)$ generates $\ker \pi_{n + r_k}$.  

Now, by Propositions~\ref{p:mapproperties}\eqref{p:mapproperties_monotonechains} and~\ref{p:monotonechain}, factorizations $z,z' \in \mathsf Z_{M_n}(\beta)$ lie in distinct connected components of $\nabla\!_\beta$ if and only if $(w,w') = \Phi_n(z,z')$ lie in different connected components of the factorization graph of $\pi_{n + r_k}(w)$, so the image $\Phi_n(\rho)$ is indeed minimal as a presentation of $M_{n + r_k}$.  Additionally, the above argument implies that any minimal presentation $\rho'$ of $M_{n + r_k}$ is contained in the image of $\Phi_n$, so its preimage $\Phi_n^{-1}(\rho')$ is a minimal presentation for $M_n$.  This completes the proof.  
\end{proof}

\begin{remark}\label{r:boundorigin}
Resuming notation from Theorem~\ref{t:minpresbij}, the bound $n > r_k^2$ first appears in Theorem~\ref{t:mesalemma}, and this is the only result explicitly using the bound.  In particular, each subsequent result requiring $n > r_k^2$ (including Corollary~\ref{c:bettilengthset}, Proposition~\ref{p:monotonechain}, Theorem~\ref{t:minpresbij}, Corollary~\ref{c:eqlen}, and several results in Section~\ref{sec:invariants}) only uses this bound to (possibly indirectly) apply Theorem~\ref{t:mesalemma}.  As such, any improvement on the bound in Theorem~\ref{t:mesalemma} immediately improves Theorem~\ref{t:minpresbij}.  
\end{remark}

\begin{example}\label{e:minpressizes}
For fixed $S$ and $n$ sufficiently large, the size of a minimal presentation for $M_n$ need not be fixed within a given $r_k$-period.  For example, if $S = \<6,9,20\>$, the size of a minimal presentation of $M_n$ ranges from 4 (for $n = 420$) to 8 (for $n = 417$).  
\end{example}

We conclude the section with Corollary~\ref{c:eqlen}, which uses Proposition~\ref{p:monotonechain} to characterize the minimal relations for $M_n$ whose factorizations have equal length.  

\begin{cor}\label{c:eqlen}
Fix $n > r_k^2$ and a minimal presentation $\rho \subset \ker \pi_n$ for $M_n$.  Then
$$\tau = \{((z_1, \ldots, z_k), (z_1', \ldots, z_k')) : (z,z') \in \rho \text{ and } |z| = |z'|\} \subset \NN^k \times \NN^k$$
is a presentation for $S$.  
\end{cor}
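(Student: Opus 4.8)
The plan is to show that $\tau$ is a presentation for $S$ by verifying the chain-connectivity criterion of Definition~\ref{d:minpres}: given any $(s,s') \in \ker \pi_S$ where $\pi_S\colon \NN^k \to S$ is the factorization homomorphism of $S$, I must produce a $\tau$-chain connecting $s$ to $s'$. The key device is the length-preserving correspondence already exploited in the proof of Theorem~\ref{t:mesalemma}: for any $\ell \geq \max(|s|,|s'|)$ sufficiently large, the elements $\pi_S(s) + \ell n$ and $\pi_S(s') + \ell n$ coincide in $M_n$, and the factorizations $(\ell - |s|, s_1, \ldots, s_k)$ and $(\ell - |s'|, s_1', \ldots, s_k')$ of that common element have equal length $\ell$ in $M_n$. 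Call these $\wh s$ and $\wh s'$. Choosing $\ell$ large enough (say $\ell > r_k^2/n$ suffices, so $\pi_S(s) + \ell n > r_k^2$ is automatic since $n > r_k^2$ already forces $\ell \geq 1$ to work, but a uniform large $\ell$ is cleanest) puts us in the regime where Corollary~\ref{c:bettilengthset} and Proposition~\ref{p:monotonechain} apply.

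First I would apply Proposition~\ref{p:monotonechain} to obtain a monotone $\rho$-chain from $\wh s$ to $\wh s'$ in $M_n$. Since $|\wh s| = |\wh s'| = \ell$ and the chain is monotone, \emph{every} factorization in the chain has length exactly $\ell$. Now examine each elementary step $(a_{i-1}, a_i) = (b_i, b_i') + (u_i, u_i)$ with $(b_i, b_i') \in \rho$: because the full factorizations $a_{i-1}, a_i$ have equal length, so do $b_i$ and $b_i'$, hence $(b_i, b_i')$ is one of the equal-length relations of $\rho$, and its last-$k$-coordinate truncation lies in $\tau$ by definition. Truncating the first coordinate throughout the chain — which is legitimate because translation by $(u_i,u_i)$ truncates to translation by the truncation of $u_i$, and the equal-length step $(b_i,b_i')$ truncates to an element of $\tau$ — yields a $\tau$-chain in $S$ from the truncation of $\wh s$ to the truncation of $\wh s'$, which are precisely $s$ and $s'$. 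This shows $\Cong(\tau) \supseteq \ker \pi_S$; the reverse inclusion is immediate since each element of $\tau$ is genuinely a relation in $S$ (the length-$\ell$ correspondence shows $\pi_S$ of the two truncations agree whenever $\pi_n$ of $z,z'$ agree and $|z|=|z'|$).

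The main obstacle is bookkeeping the truncation map carefully: I must confirm that dropping the zeroth coordinate from a length-$\ell$ monotone chain in $M_n$ is a well-defined operation sending $\rho$-steps to $\tau$-steps, i.e.\ that the intermediate factorizations really do all have length $\ell$ (this is exactly monotonicity together with equal endpoints) and that no step in the chain is one of the length-\emph{changing} relations of $\rho$. A secondary point to get right is the choice of $\ell$: I need $\pi_S(s) + \ell n > r_k^2$ so that Proposition~\ref{p:monotonechain}'s hypotheses hold for the element $\pi_n(\wh s)$, but since $n > r_k^2$, any $\ell \geq 1$ already gives $\pi_S(s) + \ell n \geq n > r_k^2$, so this costs nothing. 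Everything else — reflexivity and symmetry of the generated congruence, compatibility with translation — is formal.
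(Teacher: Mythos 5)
Your proposal is correct and follows essentially the same route as the paper: lift $s$ and $s'$ to equal-length factorizations of a common element of $M_n$, invoke Proposition~\ref{p:monotonechain} to obtain a monotone (hence constant-length) $\rho$-chain, observe that every elementary step must then come from an equal-length relation of $\rho$, and truncate the zeroth coordinate to get a $\tau$-chain in $S$; the paper simply makes the concrete choice $\ell = \max(|s|,|s'|)$ instead of an arbitrary large $\ell$. One small correction: Proposition~\ref{p:monotonechain} requires only $n > r_k^2$, not that the lifted element exceed $r_k^2$, so your secondary worry about the size of $\ell$ is a red herring (though your conclusion that any $\ell \geq \max(|s|,|s'|)$ works is right).
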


\begin{proof}
Fix $s,s' \in \mathsf Z_S(s)$ with $|s| \le |s'|$.  Let $\pi:\NN^k \to S$ denote the factorization homomorphism of $S$, and let $z = (|s'| - |s|, s_1, \ldots, s_k)$ and $z' = (0, s_1', \ldots, s_k')$.  Then $(z,z') \in \ker \pi_n$, so by Proposition~\ref{p:monotonechain} there exists a monotone $\rho$-chain from $z$ to $z'$, but since $|z| = |z'|$, each factorization $a$ in the chain must have length $|z|$ as well.  As such, 
$$\pi_n(a) = |z|n + \sum_{i = 1}^k a_ir_i = |z|n + \pi(a_1, \ldots, a_k)$$
for each $a$ in the chain, thus producing a $\tau$-chain from $s$ to $s'$.  
\end{proof}

\begin{example}\label{e:eqlen}
Resuming notation from Corollary~\ref{c:eqlen}, the presentation $\tau$ for $S$ need not be minimal.  Indeed, if $S = \<6,9,20\>$, then the minimal presentation $\rho$ of $M_{450}$ given in Example~\ref{e:minpresmap} contains three equal-length relations, yielding the presentation
$$\tau = \{((0, 8, 0), (2, 0, 3)), ((1, 6, 0), (0, 0, 3)), ((3, 0, 0), (0, 2, 0))\}$$
for $S$.  The first of the above relations is redundant, as the latter two form a minimal presentation $\tau'$ for $S$.  However, the only $\tau'$-chain between $(0, 8, 0)$ and $(2, 0, 3)$ is non-monotone, which is why $\rho$ must also contain the relation $((0, 0, 8, 0), (3, 2, 0, 3))$.  
\end{example}

\section{Applications to factorization invariants}
\label{sec:invariants}

In this section, we explore several consequences of the results in Sections~\ref{sec:sufficientshift} and~\ref{sec:minpres}.  We begin with Remark~\ref{r:minprescomputation}, which discusses computational applications of Theorem~\ref{t:minpresbij}, and Corollary~\ref{c:bettiperiodic}, which improves a recent result from commutative algebra (see Remark~\ref{r:syzygies} for more on this connection).  Next, we characterize the behavior of several arithmetical invariants of non-unique factorization over $M_n$ for large $n$.  The survey article \cite{numericalsurvey} gives an overview of several of the invariants discussed here.  

\begin{remark}\label{r:minprescomputation}
Minimal presentations are used frequently in computer software package implementations, since many quantities of interest can then be quickly computed~\cite{compoverview}.  Additionally, minimal presentations have particular significance in commutative algebra; see Remark~\ref{r:syzygies}.  Most existing algorithms to compute a minimal presentation of a given numerical monoid use Gr\"obner basis techniques, which become computationally infeasible as the number and size of the generators grow large~\cite{clo}.  

Theorem~\ref{t:minpresbij} yields a method of reducing this complexity in certain cases.  In particular, if the generators of $M = \<n_1, \ldots, n_k\>$ satisfy $n_1 > (n_k - n_1)^2$, then a minimal presentation for $M$ can be computed by first computing a minimal presentation for $M' = \<n_1 - R, \ldots, n_k - R\>$, where $R$ is some appropriately chosen multiple of $n_k - n_1$, and then successively applying the map $\Phi_*$ from Proposition~\ref{p:mapwelldefined} until a minimal presentation for $M$ is obtained.  In cases where the generators of $M'$ are significantly smaller than those of $M$, the resulting computation is much faster than directly computing a minimal presentation for $M$.  

Table~\ref{tb:minprescomputation} gives a sample of the improved runtimes that result from using Theorem~\ref{t:minpresbij}.  All runtimes were obtained in the computer algebra system \texttt{GAP} and the \texttt{numericalsgps} package, a standard setting for numerical semigroup computations.  An improved implementation of the function \texttt{MinimalPresentationOfNumericalSemigroup} that utilizes Theorem~\ref{t:minpresbij} is currently in development, and will be available with the next major release of the \texttt{numericalsgps} package.  
\end{remark}

\begin{table}
\begin{tabular}{|l|l|l|l|l|}
$n$ & $M_n$ & \texttt{GAP} Runtime \cite{numericalsgpsgap} & Remark~\ref{r:minprescomputation} \\
\hline
$50$ & $\<50,56,59,70\>$ & 1 ms & 1 ms \\
$200$ & $\<200, 206, 209, 220\>$ & 40 ms & 40 ms \\
$400$ & $\<400, 406, 409, 420\>$ & 210 ms & 210 ms \\
$1000$ & $\<1000, 1006, 1009, 1020\>$ & 3 sec & 210 ms \\
$5000$ & $\<5000, 5006, 5009, 5020\>$ & 18 min & 210 ms \\ 
$10000$ & $\<10000, 10006, 10009, 10020\>$ & 4.2 hr & 210 ms \\ 
\end{tabular}
\medskip
\caption{Runtime comparison for computing a minimal presentation for numerical monoids $M_n$ with $S = \<6,9,20\>$.  All computations performed using \texttt{GAP} and the package \texttt{numericalsgps} \cite{numericalsgpsgap}.}
\label{tb:minprescomputation}
\end{table}

As a corollary of Theorem~\ref{t:minpresbij}, we obtain an improved bound for a result conjectured by Herzog and Srinivasan and later proved by Vu in \cite{vu14}.

\begin{cor}\label{c:bettiperiodic}
The function $n \mapsto |\Betti(M_n)|$ is $r_k$-periodic for $n > r_k^2$.
\end{cor}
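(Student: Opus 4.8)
The plan is to deduce Corollary~\ref{c:bettiperiodic} directly from Theorem~\ref{t:minpresbij}, using the fact that the number of Betti elements is an invariant of any minimal presentation. First I would recall that for a minimally generated numerical monoid $M$, every minimal presentation $\rho$ has exactly one relation associated with each Betti element (in the sense that the relation $(b,b')\in\rho$ has $\pi(b)=\pi(b')$ equal to a Betti element, and distinct Betti elements are hit by distinct relations); this is standard — see \cite[Chapter~9]{fingenmon} and \cite[Chapter~7]{numerical} — and it was already illustrated in Example~\ref{e:minpres}. Consequently $|\Betti(M)|$ equals the number of distinct values $\pi(b)$ as $(b,b')$ ranges over any fixed minimal presentation $\rho$ of $M$. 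The key point is therefore to track how this count transforms under the bijection $\Phi_n$ of Theorem~\ref{t:minpresbij}.

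The main step is to show that $\Phi_n$ induces a bijection on the underlying sets of Betti elements, not merely on minimal presentations. Fix $n > r_k^2$ and a minimal presentation $\rho$ of $M_n$; by Theorem~\ref{t:minpresbij}, $\Phi_n(\rho)$ is a minimal presentation of $M_{n+r_k}$. For a relation $(z,z')\in\rho$ with associated Betti element $\beta=\pi_n(z)$, I compute the Betti element associated with $\Phi_n(z,z')$: if $|z|=|z'|$ then $\Phi_n(z,z')=(z,z')$ and the new Betti element is $\pi_{n+r_k}(z)=\beta+|z|r_k$; if (say) $|z|>|z'|$, then $\Phi_n(z,z')=(z+\ell e_0, z'+\ell e_k)$ with $\ell=|z|-|z'|$, and by the computation inside the proof of Proposition~\ref{p:mapwelldefined} the new Betti element is $\pi_{n+r_k}(z+\ell e_0)=\beta+(|z|+\ell)r_k$. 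In either case the new Betti element depends only on $\beta$ (together with $|z|$ and $|z'|$, which are themselves determined by $\beta$ once $n$ is fixed, since length differences across components are controlled by Corollary~\ref{c:bettilengthset} and the component lengths are pinned down as in the proof of Theorem~\ref{t:mesalemma}), so the assignment $\beta\mapsto\beta'$ is well defined. Since $\Phi_n$ is injective with image containing every minimal presentation of $M_{n+r_k}$ (Theorem~\ref{t:minpresbij}), the induced map $\Betti(M_n)\to\Betti(M_{n+r_k})$ is a bijection; in particular $|\Betti(M_n)|=|\Betti(M_{n+r_k})|$.

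Finally I would assemble the periodicity statement: the equality $|\Betti(M_n)|=|\Betti(M_{n+r_k})|$ holding for every $n>r_k^2$ is exactly the assertion that $n\mapsto|\Betti(M_n)|$ is $r_k$-periodic on the range $n>r_k^2$, which is the claim. I would also remark briefly on the improvement over \cite{vu14}: Vu establishes eventual periodicity with a bound that is (in the relevant normalization) considerably larger, whereas the present argument gives the explicit threshold $r_k^2$, consistent with Remark~\ref{r:boundorigin} that every eventual-behavior bound in the paper descends from the single bound in Theorem~\ref{t:mesalemma}.

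The step I expect to be the main obstacle is not any deep estimate — everything hard is already packaged into Theorem~\ref{t:minpresbij} — but rather making airtight the passage from ``bijection on minimal presentations'' to ``equality of Betti-element counts.'' One must be careful that the number of relations in a minimal presentation of $M_n$ can itself vary with $n$ within an $r_k$-period (Example~\ref{e:minpressizes}), so the count of Betti elements must genuinely be read off as the number of distinct $\pi_n$-values among the relations, and one must confirm that $\Phi_n$ sends relations with equal $\pi_n$-value to relations with equal $\pi_{n+r_k}$-value and relations with distinct $\pi_n$-values to relations with distinct $\pi_{n+r_k}$-values. Both follow from the displayed computation of the new Betti element as an explicit function of $\beta$, but this bookkeeping is the part that needs to be spelled out rather than waved through.
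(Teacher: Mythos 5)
Your proposal follows the same route as the paper: pass everything through Theorem~\ref{t:minpresbij} and argue that $\Phi_n$ descends to a bijection on Betti elements. However, a few points need repair.

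First, the opening claim that ``every minimal presentation has exactly one relation associated with each Betti element'' is false in general: a Betti element $\beta$ whose graph $\nabla_\beta$ has $c$ connected components contributes $c-1$ relations to any minimal presentation. (Your subsequent statement, that $|\Betti(M)|$ equals the number of distinct $\pi$-values among the relations of $\rho$, is the correct fact and is all you actually need.) Second, and more substantively, the assertion that $|z|$ and $|z'|$ are ``determined by $\beta$ once $n$ is fixed'' is not justified as stated; it is exactly the point where the paper does real work. The paper's proof observes that, by Theorem~\ref{t:mesalemma} and Corollary~\ref{c:bettilengthset}, any relation with $|z| \ne |z'|$ must belong to a Betti element with \emph{exactly two} connected components, hence exactly one relation in $\rho$, so no conflict can arise. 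For the equal-length case one must check that a Betti element with three or more components has \emph{all} of its factorizations of the same length (otherwise two equal-length relations $(z,z')$ and $(w,w')$ of the same $\beta$ with $|z| \ne |w|$ would map to $\beta + |z|r_k \ne \beta + |w|r_k$). This does follow from Theorem~\ref{t:mesalemma} and Corollary~\ref{c:bettilengthset}, but it is a short argument that needs to be made, not cited away; merely invoking ``component lengths are pinned down'' does not establish it. Finally, your displayed formula for the image Betti element in the unequal-length case drops a term: $\pi_{n+r_k}(z + \ell e_0) = \beta + \ell n + (|z| + \ell)r_k$, not $\beta + (|z| + \ell)r_k$; this does not affect the structure of the argument but should be corrected.
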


\begin{proof}
Given any minimal presentation $\rho$ for $M_n$, the set $\Betti(M_n)$ consists of precisely the elements with factorizations appearing in $\rho$.  Now, Theorem~\ref{t:mesalemma} and Corollary~\ref{c:bettilengthset} imply that each relation in $\rho$ involving factorizations of different lengths produces a distinct Betti element, since the corresponding Betti element $\beta$ must have exactly two connected components in $\nabla\!_\beta$.  Thus, Proposition~\ref{p:mapwelldefined} implies $\Phi_n$ induces a well-defined map $\Betti(M_n) \to \Betti(M_{n+r_k})$, and Theorem~\ref{t:minpresbij} ensures this map is a bijection.  
\end{proof}

\begin{remark}\label{r:syzygies}
The elements of a minimal presentation of a numerical monoid $M$ correspond to binomial generators of the defining toric ideal $I$ of $M$.  In fact, each minimal presentation corresponds to a minimal binomial generating set for $I$.  It is in this setting that Vu approached Corollary~\ref{c:bettiperiodic} in \cite{vu14}, where the Betti elements of $M$ correspond to Betti numbers of $I$ in homological degree 1.  
\end{remark}

\begin{remark}\label{r:vucompare}
Corollary~\ref{c:bettiperiodic}, as well as some results in Sections~\ref{sec:sufficientshift} and~\ref{sec:minpres}, appears in~\cite{vu14} using the language of Remark~\ref{r:syzygies}.  However, our approach has several advantages.  
\begin{enumerate}[(a)]
\item 
Our approach is purely combinatorial; shedding the dependence on commutative algebra makes the results available to a broader mathematical audience, and better isolates the core structural changes (i.e.\ the existence of monotone chains and Theorem~\ref{t:mesalemma}) that occur once $n$ is large enough.  

\item 
Our bound $n > r_k^2$ is lower than each of those previously given, which is crucial for effective use in computation in Remark~\ref{r:minprescomputation}.  Additionally, great care was taken to ease future improvements on our bound, as discussed in Remark~\ref{r:boundorigin}.  

\item 
Several results in this section, such as Corollary~\ref{c:catenaryquasi}, do not follow as directly from statements in~\cite{vu14} as they do from Theorem~\ref{t:minpresbij}.  Indeed, much of the theory developed in Section~\ref{sec:minpres} would have been necessary for a specialized proof of Corollary~\ref{c:catenaryquasi}, and such specialization would have obscured the underlying connection to the other consequences of Theorem~\ref{t:minpresbij} presented here.  

\end{enumerate}
\end{remark}



\begin{remark}\label{r:bettidichotomy}
As a consequence of Theorem~\ref{t:minpresbij} and Corollary~\ref{c:eqlen}, the elements of $\Betti(M_n)$ fall into two distinct categories: those with minimal relations of equal length, and those with minimal relations of different length.  Upon successive applications of $\Phi_n$, those Betti elements in the former category increase linearly with $n$ (with slope given by factorization length, preserved under $\Phi_n$), and those Betti elements in the latter category increase quadratically with $n$.  The plot in Figure~\ref{f:bettidichotomy} exhibits a graphical representation, which makes the distinction more explicit.  
\end{remark}

\begin{figure}
\begin{center}
\includegraphics[width=6.0in]{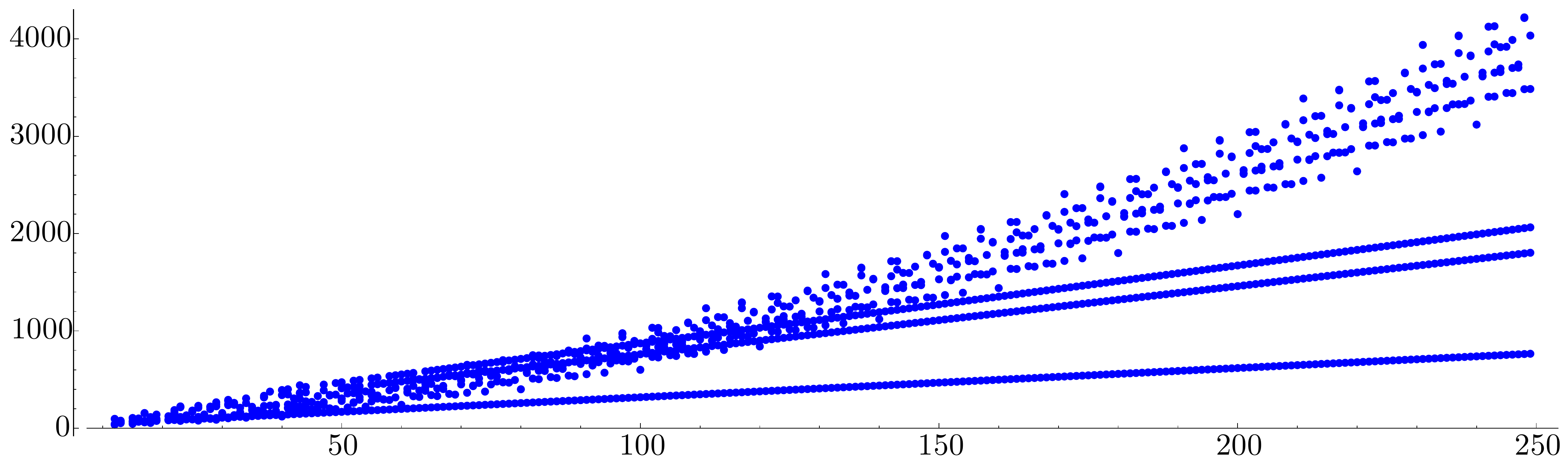}
\end{center}
\caption{A plot depicting the Betti elements of $M_n$ for $S = \<6,9,20\>$ and $n \le 250$.  Each point $(n,b)$ indicates $b \in \Betti(M_n)$.}
\label{f:bettidichotomy}
\end{figure}

Theorem~\ref{t:minpresbij} can also be applied to characterize arithmetic invariants of non-unique factorization for sufficiently large $n$.  We begin with the delta set invariant.  

\begin{defn}\label{d:deltaset}
Fix a numerical monoid $M \subset \NN$, and fix $a \in M$.  Writing 
$$\mathsf L(a) = \{\ell_1 < \cdots < \ell_r\}$$
for the set of distinct factorization lengths of $a$, the \emph{delta set} of $a$ is the set 
$$\Delta(a) = \{\ell_i - \ell_{i-1} : 2 \le i \le r\}$$
of successive differences of factorization lengths of $a$.  Lastly, the \emph{delta set} of $M$ is the union $\Delta(M) = \bigcup_{a \in M} \Delta(a)$ of the delta sets of its elements.  
\end{defn}

As a consequence of Corollary~\ref{c:bettilengthset}, we obtain Corollary~\ref{c:eventualdelta}, which offers an improved bound over \cite[Theorem~2.2]{deltashiftgen}.  

\begin{cor}[{\cite[Theorem~2.2]{deltashiftgen}}]\label{c:eventualdelta}
If $n > r_k^2$, then 
$$\Delta(M_n) = \{d\},$$
where $d = \gcd(r_1, \ldots, r_k)$.  
\end{cor}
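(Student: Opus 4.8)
The plan is to show that $\Delta(M_n) = \{d\}$ by establishing both containment directions. First, recall the general fact (cited as \cite[Proposition~2.9]{delta} in the proof of Corollary~\ref{c:bettilengthset}) that every element of $\Delta(M_n)$ is a multiple of $d$, and conversely that $d$ itself always appears in $\Delta(M_n)$ whenever $M_n$ has more than one atom: indeed, if $\beta \in \mathsf Z_S$ realizes the minimal relation corresponding to $d = \gcd(r_1,\dots,r_k)$, then the bijection from the proof of Theorem~\ref{t:mesalemma} between length-$\ell$ factorizations of $m \in M_n$ and factorizations of $m - \ell n \in S$ of length at most $\ell$ produces, for a suitable $m$, two factorization lengths differing by exactly $d$. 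So $d \in \Delta(M_n)$ and $\Delta(M_n) \subseteq d\ZZ_{\ge 1}$; it remains only to rule out gaps of size $2d$ or more.

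Next I would reduce to a statement about consecutive factorization lengths of a single element. Fix $a \in M_n$ and write $\mathsf L(a) = \{\ell_1 < \cdots < \ell_r\}$; I want to show $\ell_{i} - \ell_{i-1} = d$ for every $i$. Suppose not, so $\ell_i - \ell_{i-1} \ge 2d$ for some $i$, meaning there is no factorization of $a$ of length strictly between $\ell_{i-1}$ and $\ell_i$. Using the bijection above, pick factorizations of $a$ of lengths $\ell_{i-1}$ and $\ell_i$ and transport them to factorizations $s, s' \in \mathsf Z_S$ of $a - \ell_{i-1} n$ and $a - \ell_i n$ respectively; since $\ell_i > \ell_{i-1}$, the element $a - \ell_i n$ is still at least $n > r_k^2$, so by Lemma~\ref{l:boundsrc} the relevant factorizations are long and their last $S$-coordinates can be taken positive. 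The key point is that $a - \ell_{i-1}n - n = a - (\ell_{i-1}+d)n + (d-1)n$ lies in $S$ once we know $a - (\ell_{i-1}+d)n \in S$, which it need not be directly — so instead I would argue as in Corollary~\ref{c:bettilengthset}: since $a - \ell_i n \in S$ and $\ell_i \ge \ell_{i-1} + 2d$, we have $a - (\ell_{i-1}+d)n \in S$, hence there is a factorization of $a$ of length $\ell_{i-1}+d$, which is strictly between $\ell_{i-1}$ and $\ell_i$, a contradiction. Thus all consecutive differences equal $d$, giving $\Delta(a) \subseteq \{d\}$ for every $a$, and taking the union over $a$ (using that $d$ is actually attained) yields $\Delta(M_n) = \{d\}$.

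The main obstacle is making the interpolation step airtight: from $a - \ell_i n \in S$ with $\ell_i \ge \ell_{i-1} + 2d$ one must produce a factorization of $a$ in $M_n$ of the intermediate length $\ell_{i-1} + d$, and this requires knowing both that $a - (\ell_{i-1}+d)n \in S$ (which follows since subtracting $(\ell_i - \ell_{i-1} - d)n$, a nonnegative multiple of $n$, from $a - (\ell_{i-1}+d)n$ lands in $S$ only if that difference is itself in $S$ — so really one wants $a - (\ell_{i-1}+d)n = (a - \ell_i n) + (\ell_i - \ell_{i-1} - d)n$, and since $n \notin S$ in general this is not automatic) and that this element has a factorization of length \emph{at most} $\ell_{i-1}+d$ so that it lifts back. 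The clean way around this is to invoke Corollary~\ref{c:bettilengthset} directly for the Betti element underlying the relation, exactly as written there, rather than re-deriving the interpolation by hand — i.e., the corollary already packages the fact that minimal relations never jump by $2d$, and one only needs to observe that a non-minimal-to-minimal length gap in $\mathsf L(a)$ would have to be witnessed along a chain through Betti elements, each step of which changes length by at most $d$ by Corollary~\ref{c:bettilengthset}.
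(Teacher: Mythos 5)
Your proof ultimately lands on the right idea, but only after a detour you have to walk back yourself. The paper does the whole thing in two sentences: $d = \min\Delta(M_n)$ by elementary number theory, and $\max\Delta(M_n)$ is attained in $\Delta(\beta)$ for some Betti element $\beta$ by \cite[Theorem~2.5]{deltabetti}, which together with Corollary~\ref{c:bettilengthset} forces $\max\Delta(M_n) = d$. Your middle section, attempting to deduce $a - (\ell_{i-1}+d)n \in S$ from $a - \ell_i n \in S$, is a genuine dead end, as you correctly diagnose: you cannot pass from $a - \ell_i n \in S$ to $a - (\ell_{i-1}+d)n \in S$ by adding multiples of $n$, since $n \notin S$ in general, and the bijection in the proof of Theorem~\ref{t:mesalemma} only pairs length-$\ell$ factorizations of $a \in M_n$ with factorizations of $a - \ell n \in S$ having length \emph{at most} $\ell$, not exactly $\ell$. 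The fallback you sketch at the end is sound and is in effect a self-contained re-derivation of the Betti-element reduction that the paper cites from \cite{deltabetti}: any two factorizations of $a$ are joined by a chain in which each step is a translate of a relation $(b,b')$ with $b, b'$ in distinct connected components of the factorization graph of some Betti element, so Corollary~\ref{c:bettilengthset} gives a length change of $0$ or $d$ at each step; since all lengths of $a$ lie in a single congruence class mod $d$, the chain must visit every length in that class between $\ell_{i-1}$ and $\ell_i$, ruling out a gap of $2d$ or more. To tighten your write-up, you should state these two ingredients explicitly (distinct connected components, and the mod-$d$ rigidity) rather than summarizing as "changes length by at most $d$". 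The tradeoff: your route is self-contained but longer and contains a false start, whereas the paper is shorter by leaning on the cited Theorem~2.5 of \cite{deltabetti}.
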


\begin{proof}
An elementary number theory argument implies $d = \min\Delta(M_n)$.  Additionally, $\max \Delta(M_n)$ occurs in the delta set of a Betti element of $M_n$ by \cite[Theorem~2.5]{deltabetti}, so $\max \Delta(M_n) = d$ by Corollary~\ref{c:bettilengthset}.  
\end{proof}

Next, we examine the family of catenary degree invariants.  An introduction to the catenary degree is provided in \cite[Section~5]{numericalsurvey}, and an extensive overview of numerous catenary degree variations can be found in \cite{halffactorial}.  

\begin{defn}\label{d:catenarydegree}
Fix a numerical monoid $M = \<m_1, \ldots, m_t\>$ and an element $a \in M$.  For $z, z' \in \mathsf Z(a)$, the \emph{greatest common divisor} of $z$ and $z'$ is given by 
$$\gcd(z,z') = (\min(z_1,z_1'), \ldots, \min(z_t,z_t')) \in \NN^t,$$
and the \emph{distance} between $z$ and $z'$ is given by 
$$d(z,z') = \max(|z - \gcd(z,z')|,|z' - \gcd(z,z')|).$$
For $z, z' \in \mathsf Z(a)$ and $N \ge 1$, an \emph{$N$-chain} from $z$ to $z'$ is a sequence $w_0, \ldots, w_r \in \mathsf Z(a)$ of factorizations of $a$ such that $w_0 = z$, $w_r = z'$, and $d(w_{i-1},w_i) \le N$ for all $i \le r$.  

\begin{enumerate}[(a)]
\item 
The \emph{catenary degree} of $a$, denoted $\mathsf c(a)$, is the smallest $N \in \NN$ such that there exists an $N$-chain between any two factorizations of $a$.  

\item 
The \emph{monotone catenary degree} of $a$, denoted $\mathsf c_\textnormal{mon}(a)$, is the smallest $N \in \NN$ such that there exists a \emph{monotone} $N$-chain (i.e.\ an $N$-chain whose factorization lengths form a monotone sequence) between any two factorizations of $a$.  

\item 
The \emph{equal catenary degree} of $a$, denoted $\mathsf c_\textnormal{eq}(a)$, is the smallest $N \in \NN$ such that there exists an \emph{equal} $N$-chain (i.e.\ an $N$-chain whose factorization lengths are all identical) between any two equal-length factorizations of $a$.  

\end{enumerate}
For each invariant above, define $\mathsf c_*(M) = \sup_{a \in M} \mathsf c_*(a)$.  
\end{defn}

\begin{cor}\label{c:catenaryquasi}
The function $n \mapsto \mathsf c(M_n)$ is eventually quasilinear.  In particular, if~$n > r_k^2$ and $M_n$ is primitive, then 
$$\mathsf c(M_{n+r_k}) = \mathsf c(M_n) + d$$
for $d = \gcd(r_1, \ldots, r_k)$.  
\end{cor}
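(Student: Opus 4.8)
The plan is to establish the recursion $\mathsf c(M_{n+r_k}) = \mathsf c(M_n) + d$ directly from the structure of the map $\Phi_n$ (Proposition~\ref{p:mapwelldefined}), Theorem~\ref{t:minpresbij}, and the fact that $\mathsf c(M)$ is recoverable from any minimal presentation together with the distances it induces. The key point is the classical fact that for a finitely generated monoid, the catenary degree $\mathsf c(M)$ equals the maximum, over the relations $(z,z')$ in any minimal presentation $\rho$, of $d(z,z')$ — or more precisely, one shows $\mathsf c(M_n) = \max_{(z,z')\in\rho} d(z,z')$ after choosing $\rho$ suitably, using the standard fact that catenary degree is witnessed at a Betti element. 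So the whole corollary reduces to understanding how $\Phi_n$ changes the distance $d(z,z')$ of a relation.

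First I would fix $n > r_k^2$ with $M_n$ primitive, and a minimal presentation $\rho$ of $M_n$ realizing $\mathsf c(M_n)$ as $\max_{(z,z')\in\rho} d(z,z')$. By Theorem~\ref{t:minpresbij}, $\rho' = \Phi_n(\rho)$ is a minimal presentation of $M_{n+r_k}$, and it too realizes $\mathsf c(M_{n+r_k})$ (after possibly passing to a different minimal presentation on the $M_{n+r_k}$ side, but Theorem~\ref{t:minpresbij} says all minimal presentations are images, so $\rho'$ suffices up to the same "witnessed at a Betti element" argument — I would want to be careful here and phrase the reduction so that it holds for all minimal presentations simultaneously, e.g.\ via $\mathsf c(M) = \max_\beta \mathsf c(\beta)$ over Betti elements $\beta$, and $\mathsf c(\beta)$ is the max distance between components of $\nabla_\beta$). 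Then for each relation $(z,z')\in\rho$: if $|z|=|z'|$, then $\Phi_n(z,z')=(z,z')$, so $d$ is unchanged; if $|z| \neq |z'|$, say $|z|>|z'|$ with $\ell = |z|-|z'| = d$ by Corollary~\ref{c:bettilengthset}, then $\Phi_n(z,z') = (z+d e_0, z'+d e_k)$. Since $z$ and $z'$ lie in different connected components of $\nabla_\beta$, they share no atom, so $\gcd(z,z')=\mathbf 0$; moreover $z_0 > 0$ already and $z'_k>0$ already by Theorem~\ref{t:mesalemma}. One checks $\gcd(z+de_0,z'+de_k)=\mathbf 0$ as well, so $d(z+de_0,z'+de_k) = \max(|z|+d,|z'|+d) = |z|+d = d(z,z')+d$, using that $|z|>|z'|$ forces the max on the $z$-side.

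Combining: the "different-length" relations each have their distance increased by exactly $d$, while the "equal-length" relations are fixed. To conclude $\mathsf c(M_{n+r_k}) = \mathsf c(M_n)+d$, I need the maximum distance to be attained at a different-length relation for $n$ large. This is where I expect the main obstacle: a priori the catenary degree could be governed by an equal-length (hence shift-invariant) Betti element, and then the recursion would read $\mathsf c(M_{n+r_k}) = \mathsf c(M_n)$, not $+d$. So the crux is to show that for $n > r_k^2$ there is always a different-length Betti relation whose distance dominates. I would argue this via Corollary~\ref{c:eqlen}: the equal-length relations of $\rho$ project to a presentation $\tau$ of $S$, so their distances are bounded by a constant depending only on $S$ (independent of $n$), whereas a different-length relation has distance $\geq |z| = z_0 + \sum z_i$ which grows with $n$ — indeed $z$ corresponds under the bijection in the proof of Theorem~\ref{t:mesalemma} to a factorization in $S$ of $\beta - |z|n$, and $z_0 = |z| - |s| \geq \lceil n/r_k\rceil - r_k$ or similar, which is eventually larger than any $S$-dependent constant. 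Thus for $n > r_k^2$ the maximum is attained at a different-length relation, giving the claimed $+d$ recursion; quasilinearity in $n$ then follows by iterating, since $\mathsf c$ is constant on each $r_k$-window's worth of residues only up to the additive correction, giving a periodic-linear (quasilinear) function. I would double-check the edge case where $M_n$ has \emph{no} different-length Betti element at all, but primitivity plus $n$ large should preclude this — if every Betti relation had equal length, $M_n$ would be half-factorial, which fails for large $n$ by Corollary~\ref{c:eventualdelta} (the delta set is $\{d\}\neq\varnothing$).
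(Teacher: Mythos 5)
Your overall strategy is the paper's strategy: transfer a minimal-presentation description of the catenary degree across the shift using $\Phi_n$ and Theorem~\ref{t:minpresbij}, observing that equal-length relations are fixed while each different-length relation $(z,z')$ (disjoint supports, so $d(z,z')=\max(|z|,|z'|)$, and $|z|-|z'|=d$ by Corollary~\ref{c:bettilengthset}, with $z_0,z_k'>0$ by Theorem~\ref{t:mesalemma}) has its distance increased by exactly $d$. Where the paper differs is the precise input it uses: by \cite[Theorem~4]{catenarytamefingen}, $\mathsf c(M_n)$ equals the \emph{minimum over all minimal presentations} of the largest factorization length occurring in the presentation, and since Theorem~\ref{t:minpresbij} biject{s} the minimal presentations of $M_n$ with those of $M_{n+r_k}$, this minimum transfers at once. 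Your version of this ingredient is misstated: $\mathsf c(M)$ is \emph{not} $\max_{(z,z')\in\rho}d(z,z')$ for an arbitrary minimal presentation $\rho$ (for $\<6,9,20\>$ the presentation containing $((10,0,0),(0,0,3))$ has maximal distance $10$ while $\mathsf c(M)=7$), and $\mathsf c(\beta)$ is not the \emph{maximum} distance between components of $\nabla\!_\beta$ --- the bridging cost is a minimum over pairs, which is precisely why the min--max formula over presentations is the correct tool. Your ``choose $\rho$ suitably'' hedge is in effect that formula, but without stating it you cannot justify that $\Phi_n(\rho)$ realizes $\mathsf c(M_{n+r_k})$ on the shifted side.

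The crux you flag --- that the maximum must be attained at a different-length relation, or else the recursion would read $+0$ --- is a genuine issue (the paper handles it only implicitly in the assertion $\mu(\Phi_n(\rho_i))=\mu(\rho_i)+d$), but your patch does not close it. First, ``the equal-length relations project to a presentation $\tau$ of $S$, so their distances are bounded by a constant depending only on $S$'' is a non sequitur: Corollary~\ref{c:eqlen} says $\tau$ is a presentation of $S$, which places no upper bound on the size of the relations it contains (Example~\ref{e:eqlen} shows $\tau$ may contain redundant relations, and nothing in that corollary bounds their lengths uniformly in $n$; boundedness would have to come from elsewhere, e.g.\ from the fact that $\Phi_n$ fixes equal-length relations along the chain $n, n+r_k, n+2r_k,\ldots$). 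Second, even granting such a bound $C(S)$, comparing it against the linear lower bound (roughly $dn/r_k$) for different-length relations only yields the recursion for $n$ larger than some $S$-dependent threshold; that proves ``eventually quasilinear'' but not the asserted identity for every $n>r_k^2$, so the explicit threshold in the statement is not recovered by your argument. (Your use of Corollary~\ref{c:eventualdelta} to rule out the case of no different-length relation at all is fine: a minimal presentation consisting solely of equal-length relations would force half-factoriality, contradicting $\Delta(M_n)=\{d\}$.) To complete the proof you need either the paper's route through \cite[Theorem~4]{catenarytamefingen} and Theorem~\ref{t:minpresbij}, or an actual argument valid for all $n>r_k^2$ that the longest relation in the relevant minimal presentation has factorizations of unequal length.
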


\begin{proof}
Let $\rho_1, \ldots, \rho_m$ denote all minimal presentations of $M_n$, and let 
$$\mu_i = \max\{|z|, |z'| : (z,z') \in \rho_i\}.$$
By \cite[Theorem~4]{catenarytamefingen}, the catenary degree of $M_n$ equals 
$$\mathsf c(M_n) = \min\{\mu_i : 1 \le i \le m\}.$$
By Theorem~\ref{t:minpresbij}, each $\rho_i$ satisfies $\mu(\Phi_n(\rho_i)) = \mu(\rho_i) + d$, which implies the claim.  
\end{proof}

\begin{cor}\label{c:moncatenaryquasi}
For $n > r_k^2$, we have $\mathsf c_\textnormal{mon}(M_n) = \mathsf c_\textnormal{eq}(M_n) = \mathsf c(M_n)$.  In particular, $\mathsf c_\textnormal{mon}(M_n)$ and $\mathsf c_\textnormal{eq}(M_n)$ are both eventually quasilinear with period $r_k$ as functions of~$n$.  
\end{cor}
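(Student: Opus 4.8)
The plan is to prove the pair of equalities $\mathsf c_\textnormal{mon}(M_n) = \mathsf c_\textnormal{eq}(M_n) = \mathsf c(M_n)$ for $n > r_k^2$; the ``in particular'' clause is then immediate, since Corollary~\ref{c:catenaryquasi} already shows that $n \mapsto \mathsf c(M_n)$ is quasilinear with period $r_k$ on this range. Two of the required inequalities hold for every atomic monoid with no hypothesis on $n$: $\mathsf c(M) \le \mathsf c_\textnormal{mon}(M)$, because a monotone $N$-chain is in particular an $N$-chain; and $\mathsf c_\textnormal{eq}(M) \le \mathsf c_\textnormal{mon}(M)$, because a monotone chain joining two factorizations of equal length is automatically an equal chain (a monotone sequence with equal endpoints is constant). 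So it remains to prove $\mathsf c_\textnormal{mon}(M_n) \le \mathsf c(M_n)$ and $\mathsf c(M_n) \le \mathsf c_\textnormal{eq}(M_n)$.

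For the first of these I would combine Proposition~\ref{p:monotonechain} with the presentation-theoretic formula for the catenary degree used in the proof of Corollary~\ref{c:catenaryquasi}. Fix a minimal presentation $\rho$ of $M_n$ with $\mathsf c(M_n) = \mu(\rho)$, where $\mu(\rho) = \max\{|z|,|z'| : (z,z') \in \rho\}$. Given any $(z,z') \in \ker \pi_n$, Proposition~\ref{p:monotonechain} supplies a monotone $\rho$-chain from $z$ to $z'$, each step of which has the form $(b+u, b'+u)$ with $(b,b') \in \rho$ and $u \in \NN^{k+1}$. Since $\gcd(b+u, b'+u) = \gcd(b,b') + u$, this step has distance $d(b+u,b'+u) = d(b,b') \le \max(|b|,|b'|) \le \mu(\rho) = \mathsf c(M_n)$. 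Hence any two factorizations of an element of $M_n$ are joined by a monotone $\mathsf c(M_n)$-chain, so $\mathsf c_\textnormal{mon}(M_n) \le \mathsf c(M_n)$, and with the reverse inequality this yields $\mathsf c_\textnormal{mon}(M_n) = \mathsf c(M_n)$.

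For $\mathsf c(M_n) \le \mathsf c_\textnormal{eq}(M_n)$ I would invoke the standard identity $\mathsf c_\textnormal{mon}(M) = \max\{\mathsf c_\textnormal{eq}(M), \mathsf c_\textnormal{adj}(M)\}$ for the monotone catenary degree (see \cite{halffactorial} and the references therein), where the adjacent catenary degree $\mathsf c_\textnormal{adj}(M)$ is the supremum over $a \in M$ and consecutive lengths $\ell < \ell'$ of $\mathsf L(a)$ of the smallest distance $d(y,y')$ among $y,y' \in \mathsf Z(a)$ with $|y| = \ell$ and $|y'| = \ell'$. Since $\mathsf c_\textnormal{mon}(M_n) = \mathsf c(M_n)$ has just been established, this reduces the remaining inequality to proving $\mathsf c_\textnormal{adj}(M_n) \le \mathsf c_\textnormal{eq}(M_n)$ for $n > r_k^2$. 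To that end I would fix $a \in M_n$, consecutive lengths $\ell < \ell + d$ in $\mathsf L(a)$ (recall $\Delta(M_n) = \{d\}$ by Corollary~\ref{c:eventualdelta}), and a pair $y,y' \in \mathsf Z(a)$ with $|y| = \ell$, $|y'| = \ell + d$ and $d(y,y')$ minimal, and then manufacture from $y$ and $y'$ an element $a^* \in M_n$ carrying two factorizations of a single common length whose distance is at least $d(y,y')$ and which admit no equal chain of smaller width, using Theorem~\ref{t:mesalemma} to track the coordinates forced to be positive when passing between adjacent length classes and the correspondence $y \mapsto (y_1,\ldots,y_k) \in \mathsf Z_S(a - |y|\,n)$ from Corollary~\ref{c:eqlen}, together with Corollary~\ref{c:bettilengthset}, to limit which equal chains are available.

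The hard part is exactly this construction: promoting an adjacent-length obstruction to an equal-length one of the same width. The delicate point is that raising the length of a factorization of $a$ by $d$ (by inserting $d$ copies of the generator $n$) changes the element it factors, so one must check both that the resulting pair lies over a single element of $M_n$ and that every equal chain between the two factorizations still incurs a step of width at least $d(y,y')$. The reduction to $S$ should control the second requirement, since equal chains over $a^*$ correspond to chains of factorizations of a fixed element of $S$ that pass only through factorizations of bounded length, and Corollary~\ref{c:bettilengthset} constrains how length and distance interact for factorizations lying in distinct components of a factorization graph.
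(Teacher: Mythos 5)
Your argument that $\mathsf c_\textnormal{mon}(M_n) = \mathsf c(M_n)$ is correct and follows the same route as the paper: Proposition~\ref{p:monotonechain} supplies monotone $\rho$-chains, and the $\mu(\rho)$ formula from the proof of Corollary~\ref{c:catenaryquasi} bounds each step, exactly as the paper indicates by citing those two results. However, the proposal as a whole is incomplete. The remaining inequality $\mathsf c(M_n) \le \mathsf c_\textnormal{eq}(M_n)$, which you reduce to $\mathsf c_\textnormal{adj}(M_n) \le \mathsf c_\textnormal{eq}(M_n)$ via the identity $\mathsf c_\textnormal{mon} = \max\{\mathsf c_\textnormal{eq}, \mathsf c_\textnormal{adj}\}$, is never actually established. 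You sketch a plan --- manufacture an element $a^*$ with two equal-length factorizations at distance at least $d(y,y')$ --- correctly observe the obstruction that inserting copies of $n$ into one factorization of $\beta$ changes the element being factored, and then explicitly stop, calling this ``the hard part.'' That admission is the gap: no construction of $a^*$ is exhibited, and nothing you write guarantees one exists, so the claimed equality $\mathsf c_\textnormal{eq}(M_n) = \mathsf c(M_n)$ is not proved.

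For comparison, the paper treats this very step quite differently, and arguably not rigorously: it asserts that $\mathsf c(M_n) \le \mathsf c_\textnormal{eq}(M_n) \le \mathsf c_\textnormal{mon}(M_n)$ ``follows from Definition~\ref{d:catenarydegree},'' then proves only $\mathsf c_\textnormal{mon}(M_n) = \mathsf c(M_n)$. The second inequality in that chain is indeed immediate (a monotone chain between equal-length endpoints is constant), but the first is not a consequence of the definitions: in a two-generated monoid $\<a,b\>$ no element has two distinct equal-length factorizations, so $\mathsf c_\textnormal{eq}(\<a,b\>) = 0$ while $\mathsf c(\<a,b\>) = b$, and this is exactly the family $M_n$ with $k = 1$. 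Your instinct to separate $\mathsf c(M_n) \le \mathsf c_\textnormal{eq}(M_n)$ as a substantive claim needing its own argument was therefore well-placed; but the argument you propose is not finished, and the paper's does not supply what is missing either. To close the gap you would at minimum need the hypothesis $k \ge 2$ (so that $S$ is not cyclic and Corollary~\ref{c:eqlen} produces a nonempty equal-length presentation $\tau$), and then an argument that the quantity $\mu(\rho)$ is realized by a relation in $\tau$, or else a genuine construction of the element $a^*$ you describe.
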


\begin{proof}
Definition~\ref{d:catenarydegree} implies $\mathsf c(M_n) \le \mathsf c_\textnormal{eq}(M_n) \le \mathsf c_\textnormal{mon}(M_n)$ for each $n$, so it suffices to prove $\mathsf c_\textnormal{mon}(M_n) = \mathsf c(M_n)$.  Now apply Proposition~\ref{p:monotonechain} and Corollary~\ref{c:catenaryquasi}.  
\end{proof}

\begin{remark}\label{r:catenaryquasi}
Included in Figure~\ref{f:catenaryquasi} is a graphical representation of the quasilinear behavior described in Corollaries~\ref{c:catenaryquasi} and~\ref{c:moncatenaryquasi}.  
\end{remark}

\begin{figure}
\begin{center}
\includegraphics[width=6.0in]{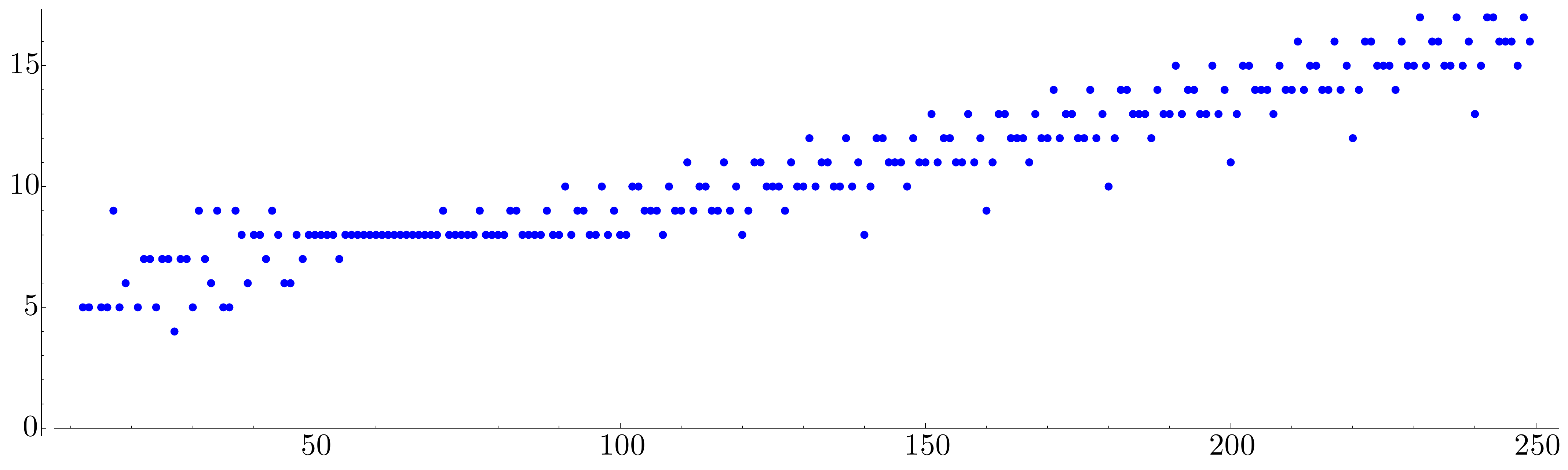}
\end{center}
\caption{A plot depicting $\mathsf c(M_n)$ for $S = \<6,9,20\>$ and $n \le 250$.}
\label{f:catenaryquasi}
\end{figure}

We conclude with Example~\ref{e:tamedegree} demonstrating why a characterization of the eventual behavior of the tame degree (Definition~\ref{d:tamedegree}) does not follow directly from Theorem~\ref{t:minpresbij}.  As such, a solution to Problem~\ref{pr:tamedegree} will likely require a characterization of the primitive elements of $M_n$ for sufficiently large $n$; see~\cite{catenarytamenumerical}.

\begin{defn}\label{d:tamedegree}
Resume notation from Definition~\ref{d:catenarydegree}.  The \emph{tame degree} of $a \in M$, denoted $\mathsf t(a)$, is the smallest $N \in \NN$ such that for each $z \in \mathsf Z(a)$ and each $i \le t$ with $a - m_i \in M$, there exists $z' \in \mathsf Z(a)$ such that $z_i' > 0$ and $d(z,z') \le N$.  
\end{defn}

\begin{example}\label{e:tamedegree}
Unlike the catenary degree, the tame degree of a numerical monoid need not be achieved at a Betti element, even for $n > r_k^2$.  As such, Theorem~\ref{t:minpresbij} does not allow us to immediately characterize the eventual behavior of the tame degree.  Indeed, for $S = \<6,9,20\>$ and $n = 401$, we have $\mathsf c(M_n) = 23$ and $\mathsf t(10869) = 27$.  In contrast, the monotone and equal catenary degrees also need not occur at a Betti element in general (see \cite{halffactorial}), but Corollary~\ref{c:moncatenaryquasi} ensures they do for $n > r_k^2$.  
\end{example}

\begin{prob}\label{pr:tamedegree}
Characterize the tame degree of $M_n$ for $n$ sufficiently large.  
\end{prob}









\section{Acknowledgements}

Much of this work was completed during the Pacific Undergraduate Research Experience in Mathematics (PURE Math), funded by National Science Foundation grants DMS-1035147 and DMS-1045082 and a supplementary grant from the National Security Agency. The authors would like to thank Scott Chapman for giving the initial motivation to start this work and for his helpful comments.


\end{document}